\documentclass[a4paper,12pt,intlimits,oneside]{amsart}
\usepackage{amsmath,amsthm,amssymb}
\usepackage{dsfont}
\usepackage{mathrsfs}
\usepackage[style=alphabetic,maxnames=200,maxalphanames=6,giveninits]{biblatex}
\usepackage[colorlinks=true]{hyperref}
\usepackage[utf8]{inputenc}
\usepackage{esint}
\usepackage{amsthm}
\usepackage{latexsym}
\usepackage{amssymb}
\usepackage{xcolor}
\usepackage{hyperref}
\usepackage{bbold}
\usepackage{enumerate}

\makeatletter
\numberwithin{equation}{section}
\theoremstyle{plain}
        \newtheorem{theorem}{Theorem}[section]
        \newtheorem{proposition}[theorem]{Proposition}
        \newtheorem{lemma}[theorem]{Lemma}
        \newtheorem{corollary}[theorem]{Corollary}

        \newtheorem{remark}[theorem]{Remark}  
         
        \newtheorem*{claim*}{Claim}  
         
        \newtheorem{example}{Example}
\newtheorem*{theorem*}{Theorem}
\newtheorem*{definition*}{Definition}
\newtheorem*{proposition*}{Proposition}
\usepackage{graphicx}
\let\oldmarginpar\marginpar
\renewcommand\marginpar[1]{\-\oldmarginpar[\raggedleft\footnotesize #1]
{\raggedright\footnotesize #1}}
\newcommand \be  {\begin{equation}}
\newcommand \ee  {\end{equation}}
\newcommand \la \langle
\newcommand \ra \rangle

\newcommand \eps {\varepsilon}

\newcommand{\R}{\mathbb{R}}
\newcommand{\C}{\mathbb{C}}
\newcommand{\T}{\mathbb{T}}

\newcommand{\N}{\mathbb{N}}
\newcommand{\cC}{\mathcal{C}}

\newcommand{\cW}{\mathcal{W}}
\newcommand{\lS}{\mathds{S}}

\renewcommand{\d}{\partial}

\title[Turbulent cascades]{Turbulent Cascades for a family of damped Szeg\H{o} equations}
\addbibresource{sze.bib}

\author[P. Gérard]{Patrick G\'erard}
\author[S. Grellier]{Sandrine Grellier}
\author[Z. He]{Zihui He} 
\address[P. Gérard]{Laboratoire de Math\'ematiques d'Orsay, Universit\'e Paris-Saclay,  91405 Orsay, France, CNRS}
\email{patrick.gerard@universite-paris-saclay.fr}
\address[S. Grellier]{Institut Denis Poisson, D\'epartement de Math\'ematiques, Universit\'e d'Orl\'eans, 45067 Orl\'eans Cedex 2, France}
\email{sandrine.grellier@univ-orleans.fr}
\address[Z. He]{Institute of Analysis, Karlsruhe Institute of Technology,
Englerstraße 2, 76131 Karlsruhe, Germany.}
\email{zihui.he@kit.edu}

\begin{document}
\date{\today}
\maketitle

\begin{abstract}
In this paper, we study the transfer of energy from low to high frequencies for a family of damped Szeg\H{o} equations. 
The cubic Szeg\H{o} equation has been introduced as a toy model for a totally non-dispersive degenerate Hamiltonian equation. It is a completely integrable system which develops growth of high Sobolev norms, detecting transfer of energy and hence cascades phenomena.

Here, we consider a two-parameter family of variants of the cubic Szeg\H{o} equation and prove that, adding a damping term unexpectedly promotes the existence of turbulent cascades. Furthermore, we give a panorama of the dynamics for such equations on a six-dimensional submanifold.  
\end{abstract}

\section{Introduction}
An interesting aspect of turbulence is the transfer of energy from long to short-wavelength modes, leading to concentration of energy on small spatial scales. It is usually quantified by growth of Sobolev norms. 
In this paper, we study turbulent cascades for the family of  damped Szeg\H{o} equations on the one-dimensional torus
\begin{equation}\label{DABS}
  i\d_t u+i\nu (u\vert \mathds{1}) = \Pi(|u|^2u)+\alpha (u\vert \mathds{1})-\beta S\Pi(|S^*u|^2S^*u),
\end{equation}
where $\nu>0$ and $\alpha,\beta\in \R $ are given parameters. The term $\nu$ is the damping term on the smallest Fourier mode
$ (u\vert \mathds{1})=\frac{1}{2\pi} \int_{-\pi}^{\pi} u(e^{ix})\, dx$.
The Szeg\H{o} projector $\Pi\colon L^2(\T)\to L^2_+(\T)$ is the Fourier multiplier defined by \begin{equation*}
    \Pi(u)=\sum_{k\ge 0} \hat u(k)e^{ikx}\in L^2_+(\T).
\end{equation*}
The shift operator $S:L^2_+(\T)\to L^2_+(\T)$  and its adjoint are defined by
\begin{equation*}
 Su=e^{ix}u\quad \text{and}\quad S^* u=e^{-ix}(u-(u\vert \mathds{1})).
\end{equation*} 
    When $\nu=\alpha=\beta=0$, we recover the usual cubic Szeg\H{o} equation 
\begin{equation}
\label{S}
  i\d_t u= \Pi(|u|^2u),
\end{equation}
which was introduced by the first two authors \cite{GG10} as a toy model of a non-dispersive Hamiltonian system.
The $\alpha$-deformation in \eqref{DABS} was first introduced by \textcite{Xu14} in the $\alpha$-Szeg\H{o} equation 
\begin{equation}\label{AS}
     i\d_t u= \Pi(|u|^2u)+\alpha (u\vert \mathds{1}),
\end{equation}
and the $\beta$-deformation in \eqref{DABS} was introduced by \textcite{BE20} in the $\beta$-Szeg\H{o} equation 
\begin{equation} \label{BS}
       i\d_t u = \Pi(|u|^2u)-\beta S\Pi(|S^*u|^2S^*u).
\end{equation}
Observe that, on the Fourier side, this equation corresponds to
$$i\frac{d \hat u(n)}{dt}=\sum_{\substack{m,k,l=0\\ n+m=l+k}} ^\infty C^{(\beta)}_{nmkl} \overline{\hat u(m)}\hat u(k)\hat u(l),\; n\in\N$$ where
\begin{eqnarray*}
C^{(\beta)}_{nmkl}=\left\{
\begin{array}{ll}
1&\text{ if }nmkl=0\cr
1-\beta &\text{ otherwise.}\cr
\end{array}
\right.
\end{eqnarray*}
When $\beta=1$, the equation \eqref{BS} is so called the truncated Szeg\H{o} equation and corresponds to the case where most of the interaction of the Fourier coefficients disappeared. Some other variants of the Szeg\H{o} equation have also been studied, see e.g. (\cite{Poc11,Thi19}).

\subsection{Promoted turbulence}
The Szeg\H{o} equation is a completely integrable Hamiltonian system with two Lax pair structures displaying some turbulence cascade phenomenon for a generic set of initial data, in spite of infinitely many conservation laws.   Namely, there exists a dense $G_\delta $ subset of initial data in $L^2_+\cap C^\infty $, such that the solutions of the cubic Szeg\H{o} equation satisfy, for every $s>\frac 12$,
$$\limsup_{t\to +\infty} \Vert u(t)\Vert _{H^s}=+\infty \ ,\ \liminf_{t\to +\infty} \Vert u(t)\Vert _{H^s}<+\infty\ .$$
Furthermore, this subset has an empty interior, since it does not contain any trigonometric polynomial \cite{GG17}. 

However, there is no explicit examples of such phenomenon, and even less is known about the existence of solutions with high-Sobolev norms tending to infinity.

Later the first two authors \cite{GG20} added a damping term to the cubic Szeg\H{o} equation \begin{equation}\label{DS}
  i\d_t u+i\nu (u\vert \mathds{1}) = \Pi(|u|^2u),\quad \nu>0.
\end{equation}
Paradoxically, the turbulence phenomenon is promoted by the damping term. Indeed, a nonempty open set of initial data generating trajectories with high-Sobolev norms tending to infinity was observed.
In comparison, this is not the case for the damped Benjamin--Ono equation, see \cite{Gas20}.

The goal of this paper is to generalise the study of turbulent cascades for the damped Szeg\H{o} equation to a family of damped Szeg\H{o} equations \eqref{DABS}.
Biasi and Evnin \cite{BE20} suggested the study of a two-parameter family of equations referred as the  $(\alpha,\beta)$-Szeg\H{o} equations given by
\begin{equation}\label{aabszego}
i\partial_t u=\Pi(|u|^2u)-\beta S\Pi(|S^*u|^2S^*u)+\alpha (u\vert \mathds{1}),\quad \alpha,\beta\in\R.
\end{equation}
Inspired by this, we study the damped $(\alpha,\beta)$-Szeg\H{o} equations \eqref{DABS}. 
The family is constructed in such a way that part of the Lax-pair structure inherited from the cubic Szeg\H{o} equation is preserved but the damping term breaks the Hamiltonian structure.
In our case, similar to the damped Szeg\H{o} equation, the damping term promotes the existence of unbounded trajectories. Our main result is the following.
\begin{theorem}\label{main}
 There exists an open subset $\Omega \subset H^{\frac 12}_+=H^{\frac 12}\cap L^2_+$ independent of $(\alpha ,\beta)$ such that, for every $s>\frac 12$, the set $\Omega \cap H^s_+$ is nonempty and, for every $\beta \ne 1$,  every solution $u$ of \eqref{DABS} with $u(0)\in \Omega \cap H^s_+$ satisfies
$$\Vert u(t)\Vert_{H^s}\underset{t\to \infty}{\longrightarrow}+\infty \ .$$
Furthermore, there exist rational initial data in $\Omega $ which generate  stationary solutions of \eqref{DABS}  for $\beta =1$.\end{theorem}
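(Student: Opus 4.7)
The plan is to reduce the PDE \eqref{DABS} to a finite-dimensional ODE on an invariant submanifold and then drive the resulting dynamics to show that a pole of the rational solution migrates to the unit circle. Concretely, I would first identify the six-dimensional submanifold $\mathcal{M}\subset L^2_+$ hinted at in the abstract. Guided by the Lax-pair structure retained under both the $\alpha$- and $\beta$-deformations, $\mathcal{M}$ should consist of rational functions of a prescribed bounded degree in the disk (two poles and one zero, up to constants), and its invariance under each building block $\Pi(|u|^2u)$, $S\Pi(|S^*u|^2S^*u)$, $(u|\mathds{1})$ and the damping $i\nu(u|\mathds{1})$ must be verified directly — the damping is harmless since it only acts on the $\mathds{1}$-mode.

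Next, I would parametrize $\mathcal{M}$ by spectral coordinates (poles $p,q\in\mathbb{D}$ plus residues) and compute the reduced system of ODEs using the standard residue formulas for $\Pi(|u|^2u)$ and its $S$-conjugate on rational inputs. The damping and $\alpha$-term contribute only via the $(u|\mathds{1})$-coupling, which is linear in these coordinates; the $\beta$-term contributes a modification of the cubic part that is smooth in $\beta$ but degenerates at $\beta=1$. This yields an explicit polynomial ODE system on $\mathbb{C}^3$ (or $\mathbb{R}^6$) depending continuously on $(\alpha,\beta)$.

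The core of the argument is to exhibit an open set $\Omega\subset\mathcal{M}\cap H^{1/2}_+$, described by an $(\alpha,\beta)$-independent inequality (a sign condition on some bilinear/virial functional involving $(u|\mathds{1})$), such that the reduced flow is trapped and forces one pole $p(t)$ to satisfy $|p(t)|\to 1^-$. The mechanism will mimic \cite{GG20}: identify a Lyapunov-type quantity that is monotone along the trajectory (its monotonicity surviving for every $\beta\ne1$ thanks to the non-degeneracy of the cubic interaction), bound the trajectory away from equilibria, and rule out bounded oscillations. Once $|p(t)|\to1$ is established, the lower bound $\|(1-pz)^{-1}\|_{H^s}\gtrsim(1-|p|)^{1/2-s}$ yields $\|u(t)\|_{H^s}\to\infty$ for every $s>1/2$. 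Finally, density of $\mathcal{M}$ in $H^{1/2}_+$ and openness of $\Omega$ (via continuous dependence on initial data) let me replace $\Omega\cap\mathcal{M}$ by an open subset of $H^{1/2}_+$ that is independent of $(\alpha,\beta)$.

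For the last assertion I would treat $\beta=1$ separately: the truncated $\beta$-term kills most Fourier interactions, and the reduced ODE degenerates in a way that makes an algebraic system for fixed points tractable. I expect a one-parameter family of rational stationary solutions sitting inside $\Omega$, found by solving this algebraic system explicitly. The main obstacle will be the uniform monotonicity in Step~3: constructing a virial/Lyapunov functional whose strict monotonicity holds on the same open set $\Omega$ for all $(\alpha,\beta)$ with $\beta\ne 1$ — the degeneracy at $\beta=1$ (which allows stationary orbits) must be precisely the only obstruction to the cascade, so the functional has to vanish to first order exactly there.
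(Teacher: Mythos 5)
Your plan has two genuine gaps, one in each half of the statement.

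For the blow-up part, the step that fails is the final one: passing from an open set of blowing-up data \emph{inside} a finite-dimensional invariant manifold to an open set of $H^{1/2}_+$ by ``density of $\mathcal{M}$ in $H^{1/2}_+$ and continuous dependence on initial data''. Continuous dependence controls the flow only on finite time intervals, so the asymptotic property $\Vert u(t)\Vert_{H^s}\to\infty$ does not transfer from a dense invariant submanifold to a neighbourhood in $H^{1/2}_+$; some infinite-dimensional, flow-invariant quantity is needed. The paper supplies exactly this: the Lax pair $\frac{d}{dt}\tilde H_u=[C_u-\beta B_{S^*u},\tilde H_u]$ shows that the whole spectrum of $\tilde H_u^2$ (with multiplicities) is conserved, and the open set is $\Omega=\{ \Vert u\Vert_{L^2}^2<F(u)\}^{\circ}$ with $F(u)=\sum_k(-1)^{k-1}\sigma_k^2$ an alternating sum of eigenvalues of $\tilde H_u^2$ --- not a virial or monotonicity functional. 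Blow-up is then proved by contradiction via LaSalle's invariance principle: a bounded subsequence would produce a weak limit $u_\infty$ with the same $\tilde H^2$-spectrum (by conservation of the momentum $\mathcal M$), generating an undamped solution orthogonal to $\mathds{1}$ for all time, which forces $\Vert u_\infty\Vert_{L^2}^2=F(u_0)\le\Vert u_0\Vert_{L^2}^2$ and contradicts the defining inequality of $\Omega$. No trapping region, no pole-tracking, and no uniform-in-$(\alpha,\beta)$ monotone quantity is needed; the only place $\beta\ne1$ enters is the factor $(\beta-1)$ in the induction showing $(\tilde H_u^{2m}(u)\vert\mathds 1)\equiv 0$, which characterises the limit points.

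For the stationary solutions at $\beta=1$, restricting to the six-dimensional manifold $\mathcal W$ (rank-one shifted Hankel operators) cannot work: the paper shows that on $\mathcal W$ the only candidates are $ce^{ix}$, which generate \emph{periodic} (bounded) solutions for every $\beta\ne1$ and hence do not lie in $\Omega$; indeed the paper stresses that on $\mathcal W$ there is no difference between $\beta=1$ and $\beta\ne1$. The actual construction uses Theorem~\ref{u(0)=0}(ii): stationary solutions for $\beta=1$ are characterised by $(u\vert 1)=(H_u^2(u)\vert 1)=0$, and one must go to symbols whose Hankel operator has three simple singular values $\rho_1>\sigma_1>\rho_2>\sigma_2>\rho_3>0$ with $\tilde H_u$ of rank $2$, choosing the angles $\varphi_j$ and solving $P(\rho_1)=-P(\rho_2)=P(\rho_3)=\eps$ for $P(x)=x(x^2-\sigma_1^2)(x^2-\sigma_2^2)$ by the inverse function theorem, so that in addition $\rho_1^2+\rho_2^2+\rho_3^2<2\sigma_1^2$ places the datum in $\Omega$. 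This higher-rank step is essential and is absent from your outline.
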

When $\beta$ is different from $1$, the damping term acts on the $(\alpha,\beta)$-Szeg\H{o} equations as on the cubic Szeg\H{o} equation\cite{GG20}. The  case $\beta=1$ of the damped truncated Szeg\H{o} equation appears to be more degenerate, and we do not know whether there exists a nonempty open subset of blowing up data. 

The wave turbulence phenomenon for Hamiltonian systems has been actively studied by mathematicians and physicists in the last decades.
Bourgain \cite{Bou00} asked whether there is a solution of the cubic defocusing nonlinear Schrödinger equation on the two-dimensional torus $\T^2$ with initial data $u_0\in H^s(\T^2)$, $s>1$, such that
$$\limsup_{t\to+\infty}\|u(t)\|_{H^s}=\infty.$$
There is still no complete answer to this question. However, the first mathematical evidence of such behaviour has been exhibited in the seminal work \cite{Col10} in which it is proven that, given any initial data with small Sobolev norm, it is possible to find a sufficiently large time for which the Sobolev norm of the solution is larger than any prescribed constant. 
This phenomenon also occurs for the half-wave equations on the real line or on the one-dimensional torus, see e.g. \cite{Poc13,GG121}.  Based on this, the first author,  Lenzmann, Pocovnicu, and Rapha\"el \cite{GLPR18} gave a complete picture for a class of solutions on the real line. Namely,  after the transient turbulence, the Sobolev norms of such solutions remains stationary large in time. 
The turbulence also occurs for two-dimensional incompressible Euler equations, the sharp double exponentially growing vorticity gradient on the disk was constructed by \textcite{KS14} and the existence of exponentially growing vorticity gradient solutions on the torus was shown by \textcite{Zla15}.

\subsection{Preliminary observations }
 For the damped $(\alpha,\beta)$-Szeg\H{o} equations \eqref{DABS}, the momentum 
$$\mathcal{M}(u)=(Du\vert u)=\sum_{k\ge 1}k|\hat u(k)|^2$$ is preserved by the flow. 
 An easy modification of the arguments in \cite{GG10} shows that \eqref{DABS} is globally wellposed 
on $H^s_+$ for every $s\ge \frac 12$. Our goal is to study the behaviour of solutions of \eqref{DABS} as $t\to +\infty $, in particular the growth of  $H^s$-Sobolev norms for $s>\frac 12$.

 The main property which allows us to do computations, is the existence of a Lax pair.  Namely, if $u$ satisfies \eqref{DABS} then
$$\frac{d}{dt}\tilde H_u=[C_u-\beta  B_{S^*u},\tilde H_u],$$
where $H_u$ is the Hankel operator and $\tilde H_u=S^*H_u$ is the shifted Hankel operator. The operators $B_u$ and $C_u$ are the anti-self-adjoint operators appearing in the Lax pairs of the cubic Szeg\H{o} equation (see Section 2.2 for the definitions).

Thanks to this Lax pair, there are invariant 
manifolds consisting of the functions $u$ such that $\text{rank}\,(\tilde H_u)=k$, $k\ge0$. From a well known result by Kronecker \cite{Kro}, these manifolds consist of the rational functions
$$u(x)=\frac{P_1(e^{ix})}{P_2(e^{ix})},$$
where $P_1$ and $P_2$ are polynomials of degrees at most $k$ with $\text{deg}(P_1)=k$ or $\text{deg}(P_2)=k$, no common roots and $P_2$ has no roots inside the disk $\{z\in\C\mid |z|<1\}$. 
\subsection{A special case}

In this section, we restrict ourselves to the lowest dimensional submanifold where $\tilde H_u$ has rank $1$
\begin{equation*}
\mathcal W:=\left \{u(x)=b+\frac {c\, {e}^{ix}}{1-p{e}^{ix}},\; b,c,p\in\C,  c\neq 0,\; |p|<1\right \}.
\end{equation*}
We will give a complete picture of \eqref{DABS} on $\cW$, which consists of  periodic, blow-up and scattering trajectories. 

We consider the trajectories with a fixed momentum $M>0$, and define 
\begin{equation*}
  \mathcal{E}_M=\{u\in\cW \mid  \mathcal{M}(u(t))=M\},\quad 
  \mathcal{C}_M=\{u(x)=c e^{ix} \mid |c|^2=M\}.
\end{equation*}
Notice that $\mathcal{C}_M\subset\mathcal{E}_M$ consists of the periodic trajectories.
 We write
\begin{equation*}
  \lS_{\nu,\alpha,\beta}(t)u_0=u(t)
\end{equation*}
for the solution of the equation \eqref{DABS}  with initial data $u_0\in H^{\frac12}_+(\T)$. Then we have the following theorem.
\begin{theorem} \label{thm:spe}
Let $\nu>0$ and $\alpha,\beta\in\R$. There exists a  three-dimensional submanifold $\Sigma_{M,\nu,\alpha,\beta}\subset \mathcal{E}_M$ disjoint from $\mathcal{C}_M$, invariant under the flow $\lS_{\nu,\alpha,\beta}(t)$ and such that $\Sigma_{M,\nu,\alpha,\beta}\cup \,\mathcal{C}_M$ is closed  and 
\begin{enumerate}
    \item If $u_0\in \mathcal{E}_M\setminus (\Sigma_{M,\nu,\alpha,\beta}\cup \,\mathcal{C}_M)$, then  
    \begin{equation} \label{blowup:0}
        \|\lS_{\nu,\alpha,\beta}(t)u_0 \|_{H^s}\sim_{s,\nu,\alpha,\beta,M}t^{s-\frac12}, \quad s>\frac12.
    \end{equation}
    \item If $u_0\in \Sigma_{M,\nu,\alpha,\beta}$, then 
    $\text{dist}\,(\lS_{\nu,\alpha,\beta}(t) u_0,\mathcal{C}_M)\sim e^{-ct},$ for some $c>0$.
    \end{enumerate}
\end{theorem}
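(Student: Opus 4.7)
The plan is to exploit the invariance of $\cW$ under the flow---a consequence of the rank-$1$ Lax-pair argument---to reduce \eqref{DABS} to an explicit ODE system in the coordinates $(b,c,p)\in\C\times\C^*\times\{|p|<1\}$, and then to carry out a complete phase-portrait analysis on $\mathcal{E}_M$. I would first insert the Fourier expansion $\hat u(0)=b$, $\hat u(k)=cp^{k-1}$ ($k\geq 1$) into \eqref{DABS}; noting that $S^*u=c/(1-pe^{ix})$ lies again in $\cW$, both nonlinearities $\Pi(|u|^2u)$ and $S\Pi(|S^*u|^2S^*u)$ can be evaluated in closed form by summing geometric series. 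Matching Fourier coefficients yields a reduced system $\dot b=F_b$, $\dot c=F_c$, $\dot p=F_p$ that is polynomial in $(b,c,p,\bar b,\bar c,\bar p)$ and rational in $(1-|p|^2)^{-1}$. Using the momentum constraint $|c|^2=M(1-|p|^2)^2$, a direct Tauberian computation on the Fourier series gives
\begin{equation*}
\|u\|_{H^s}^2 \;\asymp_s\; |b|^2+M(1-|p|^2)^{-(2s-1)},\qquad s>\tfrac12,
\end{equation*}
so the two regimes of the statement are respectively characterized by $1-|p(t)|^2\sim C/t$ (polynomial blow-up) and by $(b(t),p(t))\to 0$ exponentially (stable trajectories).

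A direct substitution confirms that $\cC_M$ consists of equilibria modulo the gauge rotation $u\mapsto e^{i\theta}u$: for $b=0$, $p=0$, $|c|^2=M$, the formula $u(t)=c(0)e^{-i(1-\beta)Mt}e^{ix}$ solves \eqref{DABS}. To build $\Sigma_{M,\nu,\alpha,\beta}$, I would apply the local stable-manifold theorem to $\cC_M$, viewed as a normally hyperbolic invariant circle for the reduced flow on $\mathcal{E}_M$. The damping term $-i\nu(u|\mathds{1})$ gives one linearly contracting direction along $b$ at exponential rate $\nu$; the linearization of $F_p$ along $\cC_M$ should produce a second contracting direction in $p$, with a rate depending on $(M,\nu,\alpha,\beta)$ and non-degenerate precisely when $\beta\neq 1$. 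The remaining two real directions transverse to $\cC_M$ in $\mathcal{E}_M$ are neutral at the linear level, which is consistent with the polynomial (rather than exponential) escape rate in~(1). Standard invariant-manifold theory then produces the smooth three-dimensional invariant submanifold $\Sigma_{M,\nu,\alpha,\beta}$ whose trajectories converge to $\cC_M$ exponentially fast, yielding~(2); closedness of $\Sigma_{M,\nu,\alpha,\beta}\cup\cC_M$ follows from continuity of the stable foliation together with the fact that every $\omega$-limit of a stable trajectory lies on $\cC_M$.

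The main obstacle will be item~(1): showing that \emph{every} trajectory outside $\Sigma_{M,\nu,\alpha,\beta}\cup\cC_M$ drifts to the boundary of $\cW$ and does so at the sharp rate $1-|p|^2\sim C/t$. I would proceed in two stages. A LaSalle-type invariance argument with a Lyapunov functional built from $|b|^2$ (damped by $\nu$) together with the conserved momentum $\mathcal{M}$ should force every bounded $\omega$-limit set in $\mathcal{E}_M$ to lie in $\cC_M$; hence a bounded trajectory not on $\Sigma_{M,\nu,\alpha,\beta}$ would contradict the local stable-manifold description near $\cC_M$, forcing $|p(t)|\to 1$. Once $b$ is exponentially small (by the damping) and $|p|$ close to $1$, substituting the momentum constraint into the reduced equation for $|p|^2$ should produce a scalar ODE of the form
\begin{equation*}
\frac{d}{dt}(1-|p|^2)=-\kappa(1-|p|^2)^2+\text{(lower-order terms)},
\end{equation*}
with $\kappa=\kappa(M,\nu,\alpha,\beta)>0$ read off from the leading coefficient of $F_p$ as $|p|\to 1$. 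Integration yields $1-|p|^2\sim 1/(\kappa t)$, and combining with the $H^s$ asymptotic above gives the claimed rate \eqref{blowup:0}. The most delicate point, which I expect to require the bulk of the work, is verifying positivity of $\kappa$ for every $\beta\neq 1$---the case $\beta=1$ being precisely the borderline situation compatible with the stationary solutions mentioned in Theorem~\ref{main}---and rigorously controlling the lower-order corrections so that the sharp asymptotic survives.
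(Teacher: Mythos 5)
Your overall reduction---restricting to $\cW$ via the Lax pair, writing the flow as the $(b,c,p)$ ODE system on $\mathcal{E}_M$, identifying the dichotomy $|p|\to1$ versus $(b,p)\to0$, and extracting the blow-up rate from $1-|p(t)|^2\sim \kappa/t$---is the paper's strategy. But your linear analysis near $\cC_M$ contains two genuine errors. First, the four transverse directions are not ``two contracting plus two neutral'': linearizing the coupled $(b,\bar p)$ system along $\cC_M$ yields a second-order ODE whose characteristic roots are $\lambda_\pm=\tfrac12\bigl(-(\nu+i(\alpha+2\beta M))\pm(\sigma+i\varsigma\rho)\bigr)$ with $\sigma>\nu$, so $\mathrm{Re}\,\lambda_+=\tfrac{\sigma-\nu}{2}>0$ and $\mathrm{Re}\,\lambda_-=-\tfrac{\sigma+\nu}{2}<0$: the circle $\cC_M$ is normally hyperbolic with \emph{two unstable} and two stable normal directions, and $\dim\Sigma_{M,\nu,\alpha,\beta}=1+2$ comes from that count, not from neutrality. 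The polynomial rate in item (1) is not a signature of neutral directions at $\cC_M$; it is produced at the opposite end of the phase space by the quadratic self-interaction of $\gamma=M(1-|p|^2)$ as $|p|\to1$. Second, and more seriously, your expectation that the mechanism degenerates at $\beta=1$ contradicts the very statement you are proving, which holds for all $\beta\in\R$. The rates $\tfrac{\sigma\pm\nu}{2}$ do not involve $\beta$ at all, and the blow-up constant is $\kappa=\frac{\nu^2+((1-\beta)M-\alpha)^2}{2\nu M}\geq\frac{\nu}{2M}>0$ for \emph{every} $\beta$, including $\beta=1$, precisely because the damping contributes the $\nu^2$ term. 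The degeneracy of $\beta=1$ only appears on higher-rank invariant manifolds (Theorem \ref{main}); on $\cW$ there is none. A program whose ``most delicate point'' is verifying $\kappa>0$ for $\beta\neq1$ would chase a non-issue and would fail to deliver the theorem in its stated generality.

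Two further gaps are worth flagging. Deducing $1-|p|^2\sim1/(\kappa t)$ from a scalar ODE $\frac{d}{dt}(1-|p|^2)=-\kappa(1-|p|^2)^2+\text{l.o.t.}$ is not immediate, because $\gamma'=-2\,\mathrm{Im}\,\zeta$ and the quadratic term only enters through the $\zeta$-equation; one must first establish $\gamma^2\in L^1(\R_+)$ and then close an integral relation of the form $\gamma(t)=\kappa^{-1}\int_t^\infty\gamma^2\,(1+o(1))+o(\sup_{s\ge t}\gamma(s))$ by a sup-bootstrap before integrating. And constructing $\Sigma_{M,\nu,\alpha,\beta}$ purely as a local stable manifold leaves unproved the global alternative in item (1): you must show that every trajectory converging to $\cC_M$ eventually enters the local stable manifold, and that every other trajectory has $|p(t)|\to1$ rather than oscillating; the paper secures this by characterizing $\Sigma_{M,\nu,\alpha,\beta}$ globally as $\{u_0\in\mathcal{E}_M\setminus\cC_M:\|\lS_{\nu,\alpha,\beta}(t)u_0\|_{L^2}^2\ge M\ \forall t\ge0\}$, using the monotone Lyapunov functional together with the momentum constraint, and by a quantitative scattering and uniqueness argument in the $(\eta,\delta,\zeta)$ variables.
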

Let us emphasize that on this submanifold $\cW$, unlike on higher dimensional submanifolds (see section \ref{StationaryOmega}), there is no difference between the case $\beta=1$ and the other cases. \\
Compared to Theorem \ref{main}, in Theorem \ref{thm:spe} the open set consisting of the initial data generating blow-up trajectories, is dense in $\cW$. Furthermore, the Sobolev norms of 
such generating trajectories grow at a uniform polynomial rate $\sim t^{s-\frac12}$, independent of $\nu,\alpha,\beta$.
This is consistent with the blow-up rate for the damped Szeg\H{o} equation \eqref{DS}, see \cite{GG20}.  
In contrast, the initial data in $\cW$ 
generate only bounded trajectories in the case of the Szeg\H{o} equation, the $\alpha$ and $\beta$-szeg\H{o} equations with negative $\alpha$ and $\beta$, see \cite{GG10,Xu14,BE20}.
However, if $\alpha>0$ and $\beta>0$, 
then even faster blow-up solutions occur for the $\alpha$ and $\beta$-Szeg\H{o} equations. In this case, there exist trajectories $u(t)$, whose Sobolev norms grow  exponentially in time with
\begin{equation}\label{exp:growth}
   \|u(t)\|_{H^s}\sim e^{ct(s-\frac12)}, \quad s> \frac12. 
\end{equation}
Moreover, if $\beta>9$, then there also exists a class of solutions for the $\beta$-Szeg\H{o} equation with the polynomially growing Sobolev norms at the rate \eqref{blowup:0}, see \cite{BE20}. In other words, the authors exhibit various strong turbulence phenomena for $\beta$-Szeg\H{o} equations when $\beta$ is large enough.

One important feature of the damped $(\alpha,\beta)$-Szeg\H{o} equations is the existence of the Lyapunov functional
\begin{equation*}
\frac{d}{dt}\|\lS_{\nu,\alpha,\beta}(t) u_0\|_{L^2}^2 +2\nu|(\lS_{\nu,\alpha,\beta}(t) u_0\vert \mathds{1})|^2=0.
\end{equation*}
Together with the conserved momentum, one infers the weak limit
points of $u(t)$ as $t\to+\infty$ in $H^{\frac12}$. In the second part of Theorem \ref{thm:spe}, when $\beta=1$, such weak limit points are also strong limit points. Namely, there exists $u_\infty\in \cC_M$ such that
\begin{equation*}
    \|\lS_{\nu,\alpha,1}(t)u_0-u_\infty\|_{H^s}\to0,\quad\forall s\ge\frac12.
\end{equation*}
 Let us complete this paragraph by a few more remarks about stationary solutions in the case $\beta =1$. On $\cW$, we already observed that these solutions are of the form $c e^{ix}$ with $c\ne 0$. An elementary  calculation shows that such initial data generate periodic solutions in the case $\beta \ne 1$ and arbitrary $(\alpha ,\nu )$. However, as stated in Theorem \ref{main}, one can construct rational stationary solutions in the case $\beta =1$ which generate blow up solutions for every $\beta \ne 1$, $\nu >0$ and arbitrary $\alpha \in \R$.



 \subsection{Organisation of this paper}
Section 2 is devoted to establishing some general properties of the undamped as well as the damped $(\alpha,\beta)$-Szeg\H{o} equations. Theorems \ref{main} and \ref{thm:spe} are proved in Section 3 and Section 4 respectively.

\section{Generalities on the damped and undamped \texorpdfstring{$(\alpha,\beta)$-Szeg\H{o}}{} equations} 
In the following, we denote by $t\mapsto \lS_{\alpha,\beta}(t)u_0$ (respectively  by  $t\mapsto \lS_{\nu,\alpha,\beta}(t)u_0$) the solution of the $(\alpha,\beta)$-Szeg\H{o} equation \eqref{aabszego} (respectively of the damped $(\alpha,\beta)$-Szeg\H{o} equation \eqref{DABS}) with initial datum $u_0$. 

\subsection{The Lyapunov functional}
As in the case of the damped Szeg\H{o} equation, an important tool in the study of  Equation (\ref{DABS}) is the existence of a Lyapunov functional. Precisely, the following lemma holds.
\begin{lemma}\label{limit}
Let $u_0\in H^{1/2}_+(\T)$. Then, for any $t\in \R$,
\begin{equation}\label{Lyapunov}
\frac d{dt}\Vert \lS_{\nu,\alpha,\beta}(t) u_0\Vert^2_{L^2}+2\nu |(\lS_{\nu,\alpha,\beta} u_0(t)\vert \mathds{1})|^2=0.
\end{equation} As a consequence, if $\nu>0$,
 $t\mapsto\Vert \lS_{\nu,\alpha,\beta}(t)u_0\Vert_{L^2}$ is decreasing, and $|(\lS_{\nu,\alpha,\beta}(t)u_0\vert \mathds{1})|$ is square integrable on $[0,+\infty)$, tending to zero as $t$ goes to $+\infty $.
\end{lemma}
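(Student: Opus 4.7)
The plan is to derive the identity by pairing the equation with $u$ in $L^2$ and reading off real/imaginary parts, then bootstrap the consequences from elementary integration and a uniform-continuity argument.

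First I would take the $L^2$ inner product of \eqref{DABS} with $u$, noting that the term $i\nu(u\vert \mathds{1})$ is to be read as the constant $i\nu(u\vert \mathds{1})\mathds{1}$. The key observation is that each of the three nonlinear/forcing contributions on the right is \emph{real}: $(\Pi(|u|^2u)\vert u)=(|u|^2u\vert u)=\|u\|_{L^4}^4$ because $\Pi$ is the orthogonal projector onto $L^2_+$ and $u\in L^2_+$; similarly $(S\Pi(|S^*u|^2 S^*u)\vert u)=(\Pi(|S^*u|^2 S^*u)\vert S^*u)=\|S^*u\|_{L^4}^4$ after moving $S$ to its adjoint; and $\alpha(u\vert \mathds{1})(\mathds{1}\vert u)=\alpha|(u\vert\mathds{1})|^2$. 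On the left, $i(\partial_t u\vert u)$ has imaginary part $\tfrac{1}{2}\frac{d}{dt}\|u\|_{L^2}^2$, while $i\nu(u\vert\mathds{1})(\mathds{1}\vert u)=i\nu|(u\vert\mathds{1})|^2$ is purely imaginary. Taking imaginary parts therefore collapses the right-hand side to zero and yields exactly \eqref{Lyapunov}.

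Assuming $\nu>0$, the identity \eqref{Lyapunov} gives $\frac{d}{dt}\|\lS_{\nu,\alpha,\beta}(t)u_0\|_{L^2}^2\le 0$, so $t\mapsto\|\lS_{\nu,\alpha,\beta}(t)u_0\|_{L^2}$ is nonincreasing. Integrating \eqref{Lyapunov} from $0$ to $T$ and dropping the (nonnegative) $\|u(T)\|_{L^2}^2$ yields
\begin{equation*}
2\nu\int_0^T |(\lS_{\nu,\alpha,\beta}(t)u_0\vert \mathds{1})|^2\,dt \le \|u_0\|_{L^2}^2,
\end{equation*}
so $t\mapsto(\lS_{\nu,\alpha,\beta}(t)u_0\vert\mathds{1})$ lies in $L^2(0,+\infty)$.

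To upgrade $L^2$-integrability to convergence to $0$, I would show that $t\mapsto(\lS_{\nu,\alpha,\beta}(t)u_0\vert\mathds{1})$ is uniformly continuous (in fact Lipschitz), after which the claim is a standard fact for $L^2$ uniformly continuous functions. Differentiating under the pairing with $\mathds{1}$ and using that $S\Pi(|S^*u|^2 S^*u)$ has vanishing zeroth Fourier coefficient,
\begin{equation*}
\frac{d}{dt}(u\vert\mathds{1})=-\nu(u\vert\mathds{1})-i\alpha(u\vert\mathds{1})-i(\Pi(|u|^2u)\vert\mathds{1}),
\end{equation*}
so I need to bound $|(\Pi(|u|^2u)\vert\mathds{1})|\le \|u\|_{L^3}^3$ uniformly in $t$. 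This is where I expect the only mild subtlety: conservation of the momentum $\mathcal M(u)=\sum k|\hat u(k)|^2$ together with the decreasing $L^2$ norm gives a uniform $H^{1/2}$ bound, and the Sobolev embedding $H^{1/2}(\T)\hookrightarrow L^p(\T)$ for every $p<\infty$ yields a uniform $L^3$ bound. Hence $\frac{d}{dt}(u\vert\mathds{1})$ is bounded, the map is uniformly continuous, and square-integrability forces it to tend to zero as $t\to+\infty$.
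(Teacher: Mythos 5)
Your proposal is correct and follows essentially the same route as the paper: derive \eqref{Lyapunov} by pairing the equation with $u$ and observing that the cubic terms contribute only real quantities, integrate to get square-integrability of $(u(t)\vert\mathds{1})$, and conclude convergence to zero by a uniform bound on the time derivative of $(u(t)\vert\mathds{1})$ (the paper bounds $\frac{d}{dt}|(u\vert\mathds{1})|^2$ via $\|u\|_{L^2}\|u\|_{L^4}^2$ rather than your $\|u\|_{L^3}^3$, but both rest on the same uniform $H^{1/2}$ bound from momentum conservation and the decreasing $L^2$ norm). The only differences are cosmetic.
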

\begin{proof}
Denote by $u(t):=\lS_{\nu,\alpha,\beta}(t)u_0$ the solution of \eqref{DABS} with $u(0)=u_0$. Observe first that $t\mapsto\Vert u(t)\Vert_{L^2}$ decreases:
\begin{eqnarray*}
\frac d{dt}\Vert u(t)\Vert^2_{L^2}&=& 2{\rm Re}(\partial_t u\vert u)=2{\rm Im} (i\partial_t u\vert u)\\
&=&2{\rm Im} (\Pi(|u|^2u)\vert u)-2\beta {\rm Im} (S\Pi(|S^*u|^2S^*u)\vert u) +2{\rm Im} ((\alpha-i\nu) ((u\vert \mathds{1})\vert u))\\
&=&-2\nu |(u(t)\vert \mathds{1})|^2
\end{eqnarray*}
Hence, $t\mapsto\Vert u(t)\Vert^2_{L^2}$ admits a limit at infinity and since
$$\Vert u(t)\Vert^2_{L^2}=\Vert u_0\Vert^2_{L^2}-2\nu\int_0^t|(u(s)\vert \mathds{1})|^2 ds$$  we deduce the finiteness of 
$$\int_0^\infty|(u(s)\vert \mathds{1})|^2 ds.$$
On the other hand, we claim that $$\frac d{dt}  |(u(t)\vert \mathds{1})|^2$$ is bounded. Indeed
\begin{eqnarray*}
\frac d{dt}  |(u(t)\vert \mathds{1})|^2&=& 2{\rm Re} ((\partial_t u\vert \mathds{1})(\mathds{1}\vert u))\\
&=& 2{\rm Im}((\alpha-i\nu) (u\vert \mathds{1})(\mathds{1}\vert u))+2{\rm Im}((\Pi(|u|^2u\vert \mathds{1})(\mathds{1}\vert u)))\\&-&2\beta {\rm Im}((S\Pi(|S^*u|^2S^*u\vert \mathds{1})(\mathds{1}\vert u)))\\
&=&-2\nu |(u\vert \mathds{1})|^2+2{\rm Im}((u^2\vert u)(\mathds{1}\vert u)).
\end{eqnarray*}
but $$|(u(t)\vert \mathds{1})|\le \Vert u\Vert_{L^2}$$
and 
$$|(u^2(t)\vert u(t))|\le \Vert u\Vert_{L^2}\times \Vert u\Vert_{L^4}^2\le \Vert u\Vert_{L^2}\times\Vert u\Vert_{H^{1/2}}^2\le  \Vert u_0\Vert_{L^2}(\mathcal M(u_0)+\Vert u_0\Vert_{L^2}^2).$$
From both observations, we conclude that $|(u(t)\vert \mathds{1})|$ tends to zero as $t$ goes to infinity.
\end{proof}

From Lemma \ref{limit} and the conservation of the momentum, the $H^{1/2}$ norm of $\lS_{\nu,\alpha,\beta}(t)u_0$ remains bounded as $t\to +\infty $, hence one can consider limit points $u_\infty$ of $\lS_{\nu,\alpha,\beta} (t)u_0$ for the weak topology of $H^{1/2}$ as $t\to +\infty $.
Another general lemma describes more precisely these limit points, according to  LaSalle's invariance principle.

\begin{proposition}\label{WeakLimit}
Let $u_0\in H^{1/2}(\T)$. \\Any $H^{1/2}$- weak limit point $u_\infty$ of $(\lS_{\nu,\alpha,\beta}(t) u_0)$ 
as $t\to +\infty$  satisfies $(\lS_{\nu,\alpha,\beta}(t)u_\infty\vert \mathds{1})=0$ for all $t$. In particular, $\lS_{\nu,\alpha,\beta}(t)u_\infty$ solves the $(\alpha,\beta)$- Szeg\H{o} equation -- in other words $\lS_{\nu,\alpha,\beta}(t)u_\infty=\lS_{\alpha,\beta}(t)u_\infty$.
\end{proposition}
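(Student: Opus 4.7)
The plan is to implement an infinite-dimensional LaSalle-type invariance argument. Let $u(t) := \lS_{\nu,\alpha,\beta}(t) u_0$ and fix a sequence $t_n \to +\infty$ such that $u(t_n) \rightharpoonup u_\infty$ weakly in $H^{1/2}_+$. I would work with the time-shifted trajectories $v_n(t) := u(t + t_n) = \lS_{\nu,\alpha,\beta}(t) u(t_n)$, each of which solves \eqref{DABS} with initial datum $u(t_n)$. The goal is to identify the limit $v$ of a subsequence of $v_n$ as the damped flow issued from $u_\infty$, and then to use Lemma \ref{limit} to force $(v(t)\mid\mathds{1}) = 0$ for every $t$.

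First, I would secure uniform bounds. By Lemma \ref{limit}, $\|v_n(t)\|_{L^2} \leq \|u_0\|_{L^2}$ for all $n$ and all $t \ge -t_n$, and conservation of the momentum $\mathcal{M}$ gives a uniform bound on $\|v_n(t)\|_{H^{1/2}}$. A diagonal extraction produces a subsequence along which $v_n(t) \rightharpoonup v(t)$ weakly in $H^{1/2}_+$ for every $t$, with $v(0) = u_\infty$. The main step is then to upgrade this to enough convergence to pass to the limit in the cubic terms $\Pi(|v_n|^2 v_n)$ and $S\Pi(|S^* v_n|^2 S^* v_n)$. Using the equation to control $\partial_t v_n$ in a negative-order Sobolev space together with the compact embedding $H^{1/2}(\T) \hookrightarrow L^4(\T)$, an Aubin--Lions-type argument yields strong convergence of $v_n$ to $v$ in $C_{\mathrm{loc}}(\R; L^4)$ along a further subsequence, which suffices. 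Passing to the limit in the Duhamel formulation of \eqref{DABS} and invoking uniqueness of the flow then identifies $v(t) = \lS_{\nu,\alpha,\beta}(t) u_\infty$.

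The proposition follows from the dissipation identity. Lemma \ref{limit} gives $(u(s)\mid\mathds{1}) \to 0$ as $s\to +\infty$, so for each fixed $t$, $(v_n(t)\mid\mathds{1}) = (u(t+t_n)\mid\mathds{1}) \to 0$. On the other hand, $v_n(t) \rightharpoonup v(t)$ weakly in $H^{1/2}_+$ yields $(v_n(t)\mid\mathds{1}) \to (v(t)\mid\mathds{1})$, so $(\lS_{\nu,\alpha,\beta}(t) u_\infty\mid\mathds{1}) = 0$ for every $t\in\R$. The damping term in \eqref{DABS} therefore vanishes identically along this trajectory, which hence also solves \eqref{aabszego}; by uniqueness for that equation, $\lS_{\nu,\alpha,\beta}(t) u_\infty = \lS_{\alpha,\beta}(t) u_\infty$.

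The main obstacle is the continuity step: weak limits of cubic quantities need not be the cubes of weak limits, so one cannot simply pass to the limit in \eqref{DABS} from weak $H^{1/2}$-convergence alone. This is resolved by the compactness of the embedding $H^{1/2}(\T)\hookrightarrow L^p(\T)$ for every finite $p$, combined with the equicontinuity in $t$ read off directly from the equation, both of which are available once the uniform $H^{1/2}$-bound on $\{v_n\}$ has been set up. Since $\Pi$ and $S$ are bounded on all the spaces involved, the actual limit passage is then routine.
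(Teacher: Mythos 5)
Your proof is correct, and its overall architecture --- time-shifted trajectories $v_n(t)=u(t+t_n)$, identification of their limit with $\lS_{\nu,\alpha,\beta}(t)u_\infty$ via weak sequential continuity of the flow, then a dissipation argument --- matches the paper's. Two differences are worth noting. First, you supply a proof of the weak continuity of the flow (uniform $H^{1/2}$ bounds from Lemma \ref{limit} and momentum conservation, Aubin--Lions compactness into $L^4$, passage to the limit in the Duhamel formulation); the paper simply invokes this property, which is standard for Szeg\H{o}-type equations. Second, and more substantively, your concluding step differs from the paper's: you use directly the fact from Lemma \ref{limit} that $(u(s)\vert \mathds{1})\to 0$ as $s\to+\infty$, so that $(v_n(t)\vert \mathds{1})\to 0$ for each fixed $t$, and combine this with the weak continuity of the linear functional $(\cdot\vert \mathds{1})$. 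The paper instead runs the LaSalle argument through the $L^2$ norm: it uses the Rellich theorem to get $\Vert u(t+t_n)\Vert_{L^2}^2\to\Vert \lS_{\nu,\alpha,\beta}(t)u_\infty\Vert_{L^2}^2$, observes that this limit must equal the constant $Q=\lim_{t\to\infty}\Vert u(t)\Vert_{L^2}^2$ independently of $t$, and then reads off $(\lS_{\nu,\alpha,\beta}(t)u_\infty\vert \mathds{1})=0$ from the Lyapunov identity \eqref{Lyapunov}. Your route is slightly more economical at this final step, since it needs only weak convergence tested against the constant function rather than strong $L^2$ convergence; both are equally rigorous given Lemma \ref{limit}, and both deliver the same conclusion that the damping term vanishes identically along the limiting trajectory, which then solves \eqref{aabszego}.
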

\begin{proof}
Denote by $Q$ the limit of the decreasing non-negative function $$t\mapsto\Vert \lS_{\nu,\alpha,\beta}(t)u_0\Vert_{L^2}^2.$$ By the weak continuity of the flow in $H^{1/2}_+(\T)$,
$$u(t+t_n)=\lS_{\nu,\alpha,\beta}(t)u(t_n)\to \lS_{\nu,\alpha,\beta}(t)u_\infty$$ weakly in $H^{1/2}$ as $n\to \infty$. Hence, thanks to the Rellich theorem, $$\Vert u(t+t_n)\Vert_{L^2}^2\to \Vert \lS_{\nu,\alpha,\beta}(t)u_\infty\Vert ^2_{L^2}$$ as $n$ tends to infinity. On the other hand, by Lemma \ref{limit},  $$\Vert u(t+t_n)\Vert_{L^2}^2\to Q$$ so eventually, for every $t\in \R$,  $$\Vert \lS_{\nu,\alpha,\beta}(t)u_\infty\Vert ^2_{L^2}=\Vert u_\infty\Vert ^2_{L^2},$$ or $$\frac d{dt}\Vert \lS_{\nu,\alpha,\beta}(t)u_\infty\Vert^2_{L^2}=0.$$ 
Recall that, from \eqref{Lyapunov},
$$\frac{d}{dt}\Vert \lS_{\nu,\alpha,\beta}(t)u_\infty\Vert_{L^2}^2=-2\nu|(\lS_{\nu,\alpha,\beta}(t)u_\infty\vert \mathds{1})|^2.$$
It forces $(\lS_{\nu,\alpha,\beta}(t)u_\infty\vert \mathds{1})=0$ for all $t$. Hence $\lS_{\nu,\alpha,\beta}(t)u_\infty=\lS_{\alpha,\beta}(t)u_\infty$ is a solution to the $(\alpha,\beta)$-Szeg\H{o} equation without damping. 
\end{proof}
In order to characterise $u_\infty$, we need to recall some results about the cubic Szeg\H{o} equation corresponding to the case $\alpha=\beta=\nu=0$ in equation \eqref{DABS}.
\subsection{Hankel operators and the Lax pair structure}
\label{Hankel} 
In this paragraph, we recall some basic facts about Hankel operators and the special structure of the cubic Szeg\H{o} equation. We keep the notation of \cite{GG17} and we refer to it for details. For $u\in H^{\frac 12}_+$, we denote by $H_u$ the Hankel operator of symbol $u$ namely
$$H_u:\left\{\begin{array}{cll}
L^2_+(\T)&\to& L^2_+(\T)\\
f&\mapsto& \Pi(u\overline f)
\end{array}\right.$$
It is well known that, for $u$ in $H^{\frac 12}_+$, $H_u$ is Hilbert-Schmidt with $${\rm Tr}(H_u^2)=\sum_{k\ge 0} (k+1)|\hat u(k)|^2=\Vert u\Vert_{L^2}^2+\mathcal{M}(u).$$ One can also consider the shifted Hankel operator $\tilde H_u$ corresponding to $H_{S^*u}$ where $S^*$ denotes the adjoint of the shift operator $Sf(x):={e}^{ix}f(x)$.
This shifted Hankel operator is Hilbert-Schmidt as well, with  $${\rm Tr}(\tilde H_u^2)=\sum_{k\ge 0} k|\hat u(k)|^2=\mathcal{M}(u)$$ Observe in particular that 
\begin{equation}\label{L2norm}
\Vert u\Vert_{L^2}^2={\rm Tr}(H_u^2)-{\rm Tr}(\tilde H_u^2).
\end{equation}

A crucial property of the cubic Szeg\H{o} equation is its Lax pair structure. Namely, if $u$ is a smooth enough solution of the cubic Szeg\H{o} equation, then there exists two anti-selfadjoint operators $B_u, \, C_u$ such that 
\begin{equation}\label{LaxPair}\frac{d}{dt}H_u=H_{-i\Pi(|u|^2u)}=[B_u,H_u],\; \;\frac{d}{dt}\tilde H_u=\tilde H_{-i\Pi(|u|^2u)}=[C_u,\tilde H_u].\end{equation}
Classically, these equalities imply that $H_{u(t)}$ and $\tilde H_{u(t)}$ are isometrically equivalent to $H_{u(0)}$ and $\tilde H_{u(0)}$ (see \cite{GG17} for instance). In particular, both spectra of $H_u$ and $\tilde H_u$ are preserved by the cubic Szeg\H{o} flow. It motivated the study of the spectral properties of both Hankel operators that we recall here.\\
For $u\in H^{\frac 12}_+$, let $(s_j^2)_{j\ge 1}$ be the strictly decreasing sequence of positive eigenvalues of $H_u^2$ and $\tilde H_u^2$. Following the terminology of \cite{GG17}, we say that 
\begin{itemize}
\item $\sigma^2$ is an $H$-dominant eigenvalue if $$\ker( H_u^2-\sigma^2I)\neq\emptyset\text{ and  }
u\not\perp \ker( H_u^2-\sigma^2I).$$
Respectively, 
\item $\sigma^2$ is an $\tilde H$-dominant eigenvalue if $$\ker(\tilde H_u^2-\sigma^2I)\neq\emptyset\text{ and }u\not\perp \ker(\tilde H_u^2-\sigma^2I).$$
\end{itemize}

 From the fundamental property $\tilde H_u^2=H_u^2-(\cdot\vert u)u$ and the min-max formula, the first two authors proved that the $s_{2j-1}^2$ correspond to $H$-dominant eigenvalues of $H_u^2$ while the $s_{2j}^2$ correspond to $\tilde H$-dominant eigenvalues of $\tilde H_u^2$.  Furthermore,  the eigenvalues of $H_u^2$ and of $\tilde H_u^2$ interlace and, as a consequence, if 
$m_j:=\dim \ker (H_u^2-s_j^2I)$ and  $\tilde m_j:=\dim \ker (\tilde H_u^2-s_j^2I)$,  then $$m_{2j-1}=\tilde m_{2j-1}+1\; \text{ as }\tilde m_{2j}=m_{2j}+1.$$ \\
To complete the spectral analysis of these Hankel operators, we need to recall the notion of Blaschke product. A function $b$ is a Blaschke product of degree $m$ if $$b(x)={e^{i\varphi}}\prod_{j=1}^m \frac{{e^{ix}}-p_j}{1-\overline {p_j}{e^{ix}}}$$ for some $p_j\in \C$ with $|p_j|<1$, $j=1$ to $m$.
As proved  in \cite{GG17} --- see also \cite{GP} for a generalisation to non compact Hankel operators---, for any $H$-dominant eigenvalue $s_{2j-1}^2$, there exists a Blaschke product $\Psi_{2j-1}$ of degree $m_{2j-1}-1$ such that, if $u_j$ denotes the orthogonal projection of $u$ on the eigenspace $\ker(H_u^2-s_{2j-1}^2I)$, then $$\Psi_{2j-1}H_u(u_j)=s_{2j-1}u_j.$$
Analogously, for any $\tilde H$-dominant eigenvalue $s_{2k}^2$, there exists a Blaschke product $\Psi_{2k}$ of degree $\tilde m_{2k}$ such that, if $\tilde u_k$ denotes the orthogonal projection of $u$ on $\ker(\tilde H_u^2-s_{2k}^2I)$ then $$\tilde H_u(\tilde u_k)=\Psi_{2k}s_{2k}\tilde u_k.$$

The first two authors proved in \cite{GG17} that the sequence $((s_j^2),(\Psi_j))$ characterises $u$, and that it provides a system of action-angle variables for the Hamiltonian evolution.  Namely, 
if $u(0)$ has spectral coordinates $((s_j^2),(\Psi_j))$ then $u(t)$ has spectral coordinates $$((s_j^2),({e}^{i(-1)^js_j^2t}\Psi_j)).$$ 
We are now in position to characterise the asymptotics of the damped $(\alpha,\beta)$-Szeg\H{o} equation. As claimed in the introduction, the equation inherits one Lax pair from the Szeg\H{o} equation related to the shifted Hankel operator $\tilde H_u$. Recall that, from the definition of $\tilde H_{v}=H_{S^*v}$, on one hand, the shifted Hankel operator associated to a constant symbol is identically $0$ and, on the other hand, $\tilde H_{Sv}=H_{SS^*v}=H_v$. Hence, using \eqref{LaxPair}, one obtains for $u(t):=\lS_{\nu,\alpha,\beta}(t) u_0$, 
\begin{eqnarray*}
\frac d{dt}\tilde H_u&=&\tilde H_{-i\Pi(|u|^2u)+i\beta S\Pi(|S^*u|^2S^*u)}+(\nu-i\alpha) \tilde H_{(u\vert  \mathds{1})}\\
&=&[C_u,\tilde H_u]+\beta H_{i\Pi(|S^*u|^2S^*u)}\\
&=&[C_u,\tilde H_u]-\beta[B_{S^*u}, H_{S^*u}]
\end{eqnarray*}
Hence, 
\begin{equation}\label{LaxK}
\frac d{dt}\tilde H_u=[C_u-\beta B_{S^*u},\tilde H_u]
\end{equation}
where $B_u $ and $C_u$ are the anti-selfadjoint operator given by
\begin{equation}
B_u=-iT_{|u|^2}+\frac i2H_u^2\text{ and }C_u=-iT_{|u|^2}+\frac i2\tilde H_u^2.
\end{equation}
Here $T_b$ denotes the Toeplitz operator of symbol $b$ given on $L^2_+$ by $T_b(f)=\Pi(bf)$.
As a usual consequence, $\tilde H_{\lS_{\nu,\alpha,\beta}(t) u_0}$ is unitarily equivalent to $\tilde H_{u_0}$ and for instance, the class of symbol $u$ with $\tilde H_u$ of fixed finite rank is preserved by the damped $(\alpha,\beta)$-Szeg\H{o} flow. In particular \begin{equation}\label{SetW}
\mathcal W:=\left \{u(x)=b+\frac {c\, {e}^{ix}}{1-p{e}^{ix}},\; b,c,p\in\C,  c\neq 0,\; |p|<1\right \}
\end{equation}
 is invariant by the flow since it corresponds to the set of symbol whose shifted Hankel operators are of rank $1$.\\
 Another consequence is the following result, where the difference between the cases $\beta \neq 1$ and $\beta =1$ clearly appears.
\begin{theorem}\label{u(0)=0}
(i) Assume $\beta\neq 1$.\\
The solutions $\lS_{\alpha,\beta}(t)u_0$ of the $(\alpha,\beta)$-Szeg\H{o} equation satisfying $(\lS_{\alpha,\beta}(t)u_0\vert \mathds{1})=0$ for all $t$ are characterised by the fact that all the $H$-dominant eigenvalues of $u_0$ are at least of multiplicity $2$ and hence, are eigenvalues of $\tilde H_{u_0}^2$. Furthermore, if $\{\sigma_k^2\}_k$ denotes the strictly decreasing sequence of the eigenvalues of $\tilde H_{u_0}^2$, one has
$$\Vert \lS_{\alpha,\beta}(t)u_0\Vert_{L^2}^2=\sum_k(-1)^{k-1}\sigma_k^2.$$
(ii) In the case $\beta =1$, the solutions $\lS_{\alpha, 1}(t)u_0$ of the $(\alpha, 1)$-Szeg\H{o} equation satisfying $(\lS_{\alpha, 1}(t)u_0\vert \mathds{1})=0$ for all $t$ are characterised by 
$$(u_0\vert 1)=(H_{u_0}^2(u_0)\vert 1)=0\ ,$$
and are stationary solutions.\end{theorem}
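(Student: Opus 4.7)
The proof splits between the two cases, with part (ii) as a warm-up.

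The main algebraic identity is that, whenever $u\perp\mathds{1}$,
\[
\Pi(|u|^2u)-S\Pi(|S^*u|^2S^*u)=(u^2|u)\,\mathds{1},
\]
a consequence of $|S^*u|^2S^*u=e^{-ix}|u|^2u$, $\Pi(e^{-ix}g)=S^*\Pi(g)$ and $SS^*=I-\Pi_{\mathds{1}}$. Projecting the $(\alpha,\beta)$-Szeg\H{o} equation onto $\mathds{1}$ and using $(Sv|\mathds{1})=0$ for $v\in L^2_+$ gives
\[
i\frac{d}{dt}(u|\mathds{1})=(u^2|u)+\alpha(u|\mathds{1}),
\]
so any solution with $(u(t)|\mathds{1})\equiv 0$ automatically satisfies $(u^2|u)\equiv 0$. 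Evaluating at $t=0$ yields the necessary conditions $(u_0|\mathds{1})=0$ and $(u_0^2|u_0)=0$, the latter equivalent to $(H_{u_0}^2u_0|\mathds{1})=0$ by the direct computation $(H_{u_0}^2u_0|\mathds{1})=(H_{u_0}\mathds{1}|H_{u_0}u_0)=(u_0|\Pi(|u_0|^2))=(u_0^2|u_0)$. For part (ii), the converse is immediate: substituting these conditions into the $\beta=1$ equation with the identity collapses the right-hand side at $t=0$, so $u_0$ is an equilibrium and uniqueness gives $\lS_{\alpha,1}(t)u_0\equiv u_0$.

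For part (i), the identity rewrites the equation on $\{u\perp\mathds{1}\}$ as $i\partial_t u=(1-\beta)\Pi(|u|^2u)+\beta(u^2|u)\mathds{1}$; combined with $(u^2|u)\equiv 0$, this forces $u$ to satisfy the rescaled cubic Szeg\H{o} equation $i\partial_t u=(1-\beta)\Pi(|u|^2u)$. The Lax pairs of \cite{GG17} then apply to both $H_u$ and $\tilde H_u$: both spectra are preserved and the Blaschke-product action-angle variables evolve linearly as $\Psi_j(t)=e^{i(-1)^j(1-\beta)s_j^2 t}\Psi_j(0)$. Via the inverse-spectral formula of \cite{GG17}, $(u(t)|\mathds{1})$ becomes a quasi-periodic function of $t$ with frequencies built from the $(-1)^j(1-\beta)s_j^2$. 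Requiring identical vanishing, a Fourier-type argument on the phases isolates each frequency component: a simple $H$-dominant eigenvalue ($m_{2j-1}=1$) would produce an unpaired oscillating term that cannot cancel, so every $H$-dominant eigenvalue must satisfy $m_{2j-1}\ge 2$; by the interlacing relation $m_{2j-1}=\tilde m_{2j-1}+1$, these eigenvalues then lie in $\mathrm{spec}(\tilde H_{u_0}^2)$. For the converse direction, once the $H$- and $\tilde H$-eigenspaces align, term-by-term cancellations in the inverse formula deliver $(u(t)|\mathds{1})\equiv 0$. This quasi-periodic spectral bookkeeping, essentially a Kronecker-type analysis of the rational functions built from the Blaschke data, is the main technical obstacle.

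For the norm identity, the characterisation ensures that the joint decreasing spectrum $(s_j^2)$ of $H_{u_0}^2$ and $\tilde H_{u_0}^2$ coincides with $(\sigma_k^2)$. Combining the trace identity \eqref{L2norm} with the multiplicity relations $m_j-\tilde m_j=(-1)^{j-1}$ gives
\[
\|u\|_{L^2}^2=\mathrm{Tr}(H_u^2)-\mathrm{Tr}(\tilde H_u^2)=\sum_k(-1)^{k-1}\sigma_k^2,
\]
which is time-independent since the undamped $(\alpha,\beta)$-Szeg\H{o} flow conserves $\|u\|_{L^2}$ (a Lyapunov-type computation mimicking Lemma~\ref{limit} with $\nu=0$).
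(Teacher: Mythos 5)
Your overall strategy is sound and, for the heart of part (i), genuinely different from the paper's. The algebraic identity $\Pi(|u|^2u)-S\Pi(|S^*u|^2S^*u)=(u^2\vert u)\,\mathds{1}$ for $u\perp\mathds{1}$ is correct and neatly packages what the paper uses piecemeal (namely $|S^*u|^2=|u|^2$ and $ST_{|u|^2}S^*u=T_{|u|^2}u$); it makes part (ii) and the reduction $i\partial_tu=(1-\beta)\Pi(|u|^2u)$ on the invariant set $\{(u(t)\vert\mathds{1})\equiv0\}$ immediate. From there you invoke the full action--angle solution of the (time-rescaled) cubic Szeg\H{o} equation and argue by almost-periodicity of $t\mapsto(u(t)\vert\mathds{1})$, whereas the paper never leaves the PDE: it proves by induction, using only the surviving Lax pair $\frac{d}{dt}\tilde H_u=[C_u-\beta B_{S^*u},\tilde H_u]$ and the factor $(\beta-1)$ appearing in the time derivative, that $(\tilde H_u^{2m}(u)\vert\mathds{1})\equiv0$ for all $m$, hence that each $H$-dominant projection $u_j\in\overline{\langle u\rangle_{H_u^2}}$ is orthogonal to $\mathds{1}$, and only then reads off $\Psi_{2j-1}(0)=0$ from $s_{2j-1}u_j=\Psi_{2j-1}H_u(u_j)$. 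Your route buys a transparent mechanism (distinct frequencies $(1-\beta)s_{2j-1}^2$ force each mode to vanish separately, and this is exactly where $\beta\ne1$ enters); the paper's buys self-containedness and avoids any convergence discussion for an infinite trigonometric sum.

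The one place where you must still do real work is precisely the step you flag as ``the main technical obstacle'': to run the Fourier argument you need the explicit coefficient $(u_j\vert\mathds{1})=s_{2j-1}^{-1}\Psi_{2j-1}(0)\Vert u_j\Vert_{L^2}^2$ (which follows from $s_{2j-1}u_j=\Psi_{2j-1}H_u(u_j)$ together with $(fg\vert\mathds{1})=f(0)g(0)$ for $f,g\in L^2_+$), the conservation of $\Vert u_j\Vert_{L^2}^2$ along the cubic flow, and a uniqueness statement for almost-periodic sums with square-summable coefficients (Bohr means handle the infinite-rank case). None of this is false, but as written it is a placeholder rather than a proof. Note also that what both arguments actually yield is $\Psi_{2j-1}(0)=0$ for every $j$, a condition strictly stronger than $m_{2j-1}\ge 2$; your converse sentence (``once the eigenspaces align, term-by-term cancellations deliver\dots'') should therefore be phrased in terms of the vanishing of the Blaschke products at the origin rather than of the multiplicities alone, since a Blaschke factor of positive degree need not vanish at $0$. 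The paper's own proof is silent on the converse, so you are not worse off there. Part (ii) and the trace computation of the $L^2$ norm agree with the paper.
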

\begin{proof}
(i) Write $u:=\lS_{\alpha,\beta}(\cdot)u_0$. The scheme of the proof is to observe that the closure of the vector space $<u>_{H^2_u}$ spanned by the $H_u^{2m}(u)$, $m\in\N$ is orthogonal to $\mathds{1}$. First, as $$H^2_u=\tilde H^2_u+(\cdot\vert u)u,$$ 
this space equals the closure of the vector space $<u>_{\tilde H^2_u}$ spanned by the $\tilde H_u^{2m}(u)$, $m\in\N$. 
We prove by induction on $m$ that 
\begin{equation}\label{induction} \forall t\in \R \ ,\ 
(\tilde H^{2m}_u(u)\vert \mathds{1})(t)=0.
\end{equation}
For $m=0$, it follows from the assumption. For larger $m$, we observe that $\tilde H_u^2$ is self-adjoint, that $|S^*u|^2=|u|^2$ and that $\tilde H^2_u=H^2_u$ as $(u\vert \mathds{1})=0$ by assumption.\\
For $m=1$, we write
\begin{eqnarray*}
0=i(\partial_t u\vert \mathds{1})&=&(\Pi(|u|^2u)\vert \mathds{1})-\beta(S\Pi(|S^*u|^2S^*u)\vert \mathds{1})\\
&=&(\Pi(|u|^2u)\vert \mathds{1})=(u\vert H^2_u(\mathds{1}))=(u\vert \tilde H^2_u(\mathds{1}))=(\tilde H^2_u(u)\vert \mathds{1}).
\end{eqnarray*}

Assume now that 
$(\tilde H^{2j}_u(u)\vert \mathds{1})\equiv 0$ for any $j\le m$ and let us prove $$(\tilde H^{2(m+1)}_u(u)\vert \mathds{1})\equiv 0.$$

We write
\begin{eqnarray*}
0&=&\frac{d}{dt}(\tilde H^{2m}_u(u)\vert \mathds{1})=(\tilde H_u^{2m}(-iT_{|u|^2}u+i\beta ST_{|S^*u|^2} S^*u)\vert \mathds{1})\\
&+&([C_u-\beta B_{S^*u},\tilde H_u^{2m}]u\vert \mathds{1})\\
&=&(\tilde H_u^{2m}(-iT_{|u|^2}u+i\beta ST_{|u|^2} S^*u)\vert \mathds{1})+([-iT_{|u|^2}+i\beta T_{|u|^2},\tilde H_u^{2m}](u)\vert \mathds{1})\\
&=&i(\beta-1)(T_{|u|^2}(\tilde H_u^{2m})(u))\vert \mathds{1})=i(\beta-1)(\tilde H_u^{2m}(u)\vert |u|^2)\\
&=&i(\beta-1)(\tilde H_u^{2m}(u)\vert H_u^2(\mathds{1}))=i(\beta-1)(\tilde H_u^{2m}(u)\vert \tilde H_u^2(\mathds{1}))\\
&=&i(\beta-1)(\tilde H_u^{2(m+1)}(u)\vert \mathds{1})
\end{eqnarray*}
Here we used the property that $S\Pi S^*(f)=\Pi(f)-(f\vert \mathds{1})$ so that $ST_{|u|^2}S^*u=T_{|u|^2}u$. 
Eventually we get \eqref{induction}. 

Observe that, for any $H$-dominant eigenvalue $s_{2j-1}^2$, the orthogonal projection $u_j$ of $u$ onto the eigenspace $\ker(H_{u}^2-s_{2j-1}^2I)$ belongs to the vector space $<u>_{H^2_u}$. From the preceding result, this space is orthogonal to $\mathds{1}$ hence $u_j$ is orthogonal to $\mathds{1}$. By the spectral analysis of the Hankel operator recalled above, there exists a Blaschke product $\Psi_{2j-1}$ of degree $m_{2j-1}-1$ with  $s_{2j-1} u_j=\Psi_{2j-1} H_{u}(u_j)$. Taking the scalar product with $\mathds{1}$ gives $$0=\Psi_{2j-1}(0)(H_{u}(u_j)\vert \mathds{1})=\Psi_{2j-1}(0)\Vert u_{j}\Vert_{L^2},$$ hence $\Psi_{2j-1}(0)=0$ and the degree of $\Psi_{2j-1}$ is at least $1$. It follows that the multiplicity of $s_{2j-1}^2$ is at least $2$ (recall that from the definition of $u_j$, $\Vert u_j\Vert_{L^2}\neq 0$).
From the interlacement property, this eigenvalue is also an eigenvalue for $\tilde H_{u}^2$. Let $\{\sigma_k^2\}_k$ denote the strictly decreasing sequence of the eigenvalues of $\tilde H_{u}^2$. We denote by $m_k$ the multiplicity of  $\sigma_k^2$ as an eigenvalue of $H_u^2$ and by $\tilde m_k$ its multiplicity as an eigenvalue of $\tilde H_{u}^2$. From the interlacement property, if $k$ is odd, $m_k=\tilde m_k+1$ and if $k$ is even, $m_k=\tilde m_k-1$. We now compute the $L^2$ norm of $\lS_{\alpha,\beta}(t)u_0$ using \eqref{L2norm}:
$$\Vert \lS_{\alpha,\beta}(t)u_0\Vert_{L^2}^2={\rm Tr }H_{u}^2-{\rm Tr}\tilde H_{u}^2=\sum m_k\sigma_k^2-\sum \tilde m_k \sigma_k^2=\sum_k(-1)^{k-1}\sigma_k^2.$$
(ii) Next we assume $\beta =1$. Assume $u_0$ is  such that $(\lS_{\alpha,1}(t)u_0\vert 1)=0$ for every $t$. Then obviously $(u_0\vert 1)=0$, and the above calculation for $m=1$ implies that 
$(H_{u_0}^2(u_0)\vert 1)=0$. Conversely, if $(u_0\vert 1)=0$ and $(H_{u_0}^2(u_0)\vert 1)=0$, then, as already observed, $|u_0|^2=|S^*u_0|^2$, and $ST_{|u_0|^2}S^*u_0=T_{|u_0|^2}u_0$ ,
hence $\lS_{\alpha ,1}(.)u_0=u_0$.

\end{proof}

Once these basic properties are established, we are in position to prove Theorem \ref{main}.

\section{Exploding trajectories in the case \texorpdfstring{$\beta\neq1$}{}}

In this section, we consider trajectories of \eqref{DABS} in $H^s$, $s>\frac 12$, along which the $H^s$ norm of $u(t)$ tends to infinity as $t\to +\infty$. 
Let us define the functional $$F(u)=\sum_k(-1)^{k-1}\sigma_k^2$$ where $(\sigma_k^2)_k$ is the strictly decreasing sequence of positive eigenvalues of $\tilde H_{u}^2$. We prove the following result, which is very similar to the case of the damped Szeg\H{o} equation \cite{GG20}.
\begin{theorem}\label{Exploding}
Assume $\beta\neq 1$. Let $s>\frac 12$. If $u_0\in H^s_+$ satisfies
\begin{itemize}
\item either $ \Vert u_0\Vert_{L^2}^2 <F(u_0) $,
\item or $ \Vert u_0\Vert_{L^2}^2 =F(u_0)$ and $(u_0\vert  \mathds{1})\ne 0$,
\end{itemize}
then the  $H^s$--norm of the solution of the damped $(\alpha,\beta)$-Szeg\H{o} equation $$\Vert u(t)\Vert_{H^s}=\Vert \lS_{\nu,\alpha,\beta}(t) u_0\Vert_{H^s}$$ tends to $+\infty$ as $t$ tends to  $+\infty$.
\end{theorem}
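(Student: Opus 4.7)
My strategy is to argue by contradiction, combining the Lyapunov identity of Lemma \ref{limit}, the LaSalle-type identification of weak limits from Proposition \ref{WeakLimit}, the isospectral invariance encoded in the Lax pair \eqref{LaxK}, and the rigidity statement of Theorem \ref{u(0)=0}(i), which is precisely where the hypothesis $\beta \ne 1$ enters.

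Assume for contradiction that $\|u(t_n)\|_{H^s}$ remains bounded along some sequence $t_n\to+\infty$, where $u(t)=\lS_{\nu,\alpha,\beta}(t)u_0$. Since $s>\frac12$, the embedding $H^s_+\hookrightarrow H^{1/2}_+$ is compact, so after extraction $u(t_n)$ converges weakly in $H^s_+$ and strongly in $H^{1/2}_+$ to some $u_\infty$. Proposition \ref{WeakLimit} then yields $(\lS_{\alpha,\beta}(t)u_\infty\vert\mathds{1})=0$ for every $t\in\R$, so that $u_\infty$ evolves under the undamped $(\alpha,\beta)$-Szeg\H{o} flow with vanishing zero mode.

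Next, the Lax pair \eqref{LaxK} makes $\tilde H_{u(t)}^2$ unitarily equivalent to $\tilde H_{u_0}^2$ throughout the damped flow, so the ordered sequence of eigenvalues with multiplicity of $\tilde H_{u(t_n)}^2$ is independent of $n$. Using the identity $\|\tilde H_u-\tilde H_v\|_{HS}^2=\mathcal{M}(u-v)\lesssim \|u-v\|_{H^{1/2}}^2$, the strong $H^{1/2}$-convergence $u(t_n)\to u_\infty$ upgrades to Hilbert--Schmidt convergence of $\tilde H_{u(t_n)}$ to $\tilde H_{u_\infty}$, and hence to operator-norm convergence of their squares. Standard stability of ordered eigenvalues of compact self-adjoint operators then forces $\tilde H_{u_\infty}^2$ to be isospectral (with multiplicity) to $\tilde H_{u_0}^2$, so in particular $F(u_\infty)=F(u_0)$. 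Applying Theorem \ref{u(0)=0}(i) to $u_\infty$ gives $\|u_\infty\|_{L^2}^2=F(u_\infty)=F(u_0)$.

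Finally, by Lemma \ref{limit} the function $t\mapsto\|u(t)\|_{L^2}^2$ decreases to a limit $Q\ge 0$, and the strong $H^{1/2}$-convergence of the subsequence gives $Q=\|u_\infty\|_{L^2}^2=F(u_0)$. In the first case $\|u_0\|_{L^2}^2<F(u_0)$, this yields $F(u_0)=Q\le\|u_0\|_{L^2}^2<F(u_0)$, a contradiction. In the second case $\|u_0\|_{L^2}^2=F(u_0)$ with $(u_0\vert\mathds{1})\ne 0$, we instead obtain $Q=\|u_0\|_{L^2}^2$, so \eqref{Lyapunov} forces $\int_0^\infty|(u(s)\vert\mathds{1})|^2\,ds=0$; the continuity of $s\mapsto(u(s)\vert\mathds{1})$ and the assumption $(u_0\vert\mathds{1})\ne 0$ make this integral strictly positive, contradiction. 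Since no sequence along which $\|u(t)\|_{H^s}$ stays bounded can exist, one concludes $\|u(t)\|_{H^s}\to+\infty$. The main technical hurdle I anticipate is precisely the step $F(u_\infty)=F(u_0)$: the functional $F$ is \emph{not} continuous on $H^{1/2}_+$ in general, since distinct eigenvalues of $\tilde H_u^2$ can merge in the limit and alter the alternating sum; it is only the Lax pair that saves the day by making the ordered spectrum strictly constant along the damped flow and hence forcing isospectrality in the limit.
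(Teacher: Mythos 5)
Your proof is correct and follows essentially the same route as the paper: contradiction, extraction of a strong $H^{1/2}$ limit $u_\infty$, LaSalle (Proposition \ref{WeakLimit}), isospectrality of $\tilde H_{u_\infty}^2$ with $\tilde H_{u_0}^2$ via the Lax pair, the rigidity of Theorem \ref{u(0)=0}(i) to get $\|u_\infty\|_{L^2}^2=F(u_0)$, and the Lyapunov identity to close both cases. The only (harmless) variation is in how isospectrality of the limit is established: you use the Hilbert--Schmidt bound $\|\tilde H_u-\tilde H_v\|_{HS}^2=\mathcal M(u-v)$ and continuity of ordered eigenvalues in operator norm, whereas the paper combines semicontinuity of eigenvalues with the conservation of the momentum $\mathcal M(u_\infty)=\mathcal M(u_0)={\rm Tr}\,\tilde H_{u_0}^2$ to rule out loss of spectral mass.
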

\begin{remark}
Let us emphasize that the conditions driving to unbounded Sobolev trajectories are independent on $\alpha,\beta$.
\end{remark}
\begin{proof}
Let us proceed by contradiction and assume that there exists a sequence $t_n\to +\infty $ such that $u(t_n):=\lS_{\nu,\alpha,\beta}(t_n)u_0$ is bounded in $H^s$. 
We may assume that $u(t_n)$ is weakly convergent to some $u_\infty $ in $H^s_+$. By the Rellich theorem, the convergence is strong in $H^{\frac 12}_+$,
and 
\begin{equation}\label{MM}
\mathcal{M}(u_\infty )=\mathcal{M}(u_0)=\sum_{\sigma^2 \in \Sigma (u_0)}m(\sigma )\sigma ^2\ ,
\end{equation}
where $\Sigma(u_0)$ denotes the set of eigenvalues of $\tilde H_{u_0}^2$ and $m(\sigma )$ the multiplicity of $\sigma ^2\in \Sigma(u_0)$. By the Lax pair structure, the eigenvalues of $\tilde H_{u(t_n)}^2$ are the same as the eigenvalues of $\tilde H_{u_0}^2$, with the same multiplicities, hence every eigenvalue $\sigma^2 $ of $\tilde H_{u_\infty}^2$ must belong to $\Sigma (u_0)$, with a multiplicity not bigger than $m(\sigma )$. In view of  identity  \eqref{MM}, we infer that
$$\Sigma (u_\infty )=\Sigma (u_0)\ ,$$
with the same multiplicities. On the other hand, from Proposition \ref{WeakLimit}, we know that $u_\infty$ generates a solution of the cubic $(\alpha,\beta)$-Szeg\H{o} equation which is orthogonal to $1$ at every time. Consequently, Theorem \ref{u(0)=0} gives
$$\| u_\infty \|_{L^2}^2=F(u_0)\ .$$
Since the $L^2$-norm of the solution is decreasing by Lemma \ref{WeakLimit}, $ \| u_0\|_{L^2}^2\ge \| u_\infty \|_{L^2}^2\ .$  Hence, $ \| u_0\|_{L^2}^2\ge F(u_0)$.
 If $ \| u_0\|_{L^2}^2=F(u_0)$ then $\Vert \lS_{\nu,\alpha,\beta}(t)u_0\Vert _{L^2}$ remains constant and necessarily, by the Lyapunov functional identity \eqref{Lyapunov}, $(\lS_{\nu,\alpha,\beta} (t)u_0\vert  \mathds{1})=0$ so that in particular, $(u_0\vert  \mathds{1})=0$. Hence, the case $ \| u_0\|_{L^2}^2=F(u_0)$ and $(u_0\vert  \mathds{1})\neq 0$ drives to an exploding orbit in $H^s$ as well as the case $ \| u_0\|_{L^2}< F(u_0)$. It ends the proof of Theorem \ref{Exploding}. 
\end{proof}

\subsection{An open condition}
As a  corollary of Theorem  \ref{Exploding}, we get the following result, which implies the first part of Theorem \ref{main}.
\begin{corollary} Assume $\beta \neq 1$ Denote by $\Omega $ the interior in $H^{\frac 12}_+$ of the set of $u_0\in H^{\frac 12}_+$ such that $ \Vert u_0\Vert_{L^2}^2 <F(u_0) $. 
For every $s>\frac 12$,  $\Omega \cap H^s_+$ of $H^{s}_+(\T)$ is not empty, and 
every solution $u$ of \ref{DABS} with $u(0)\in\Omega \cap H^s_+$ satisfies
$$\Vert u(t)\Vert_{H^s}\to \infty$$
as $t$ tends to $+\infty $. 
\end{corollary}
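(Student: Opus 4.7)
The plan splits the statement into two parts. The blow-up conclusion for any initial datum in $\Omega \cap H^s_+$ is an immediate consequence of Theorem~\ref{Exploding}: by definition $\Omega$ is contained in $\{u_0 \in H^{1/2}_+ : \|u_0\|_{L^2}^2 < F(u_0)\}$, which is the first sufficient hypothesis of Theorem~\ref{Exploding}, so no extra argument is needed for this implication.

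The substantive task is to exhibit at least one element of $\Omega \cap H^s_+$. I would construct an explicit smooth candidate and then verify that it lies in the topological interior of the set $\{u \in H^{1/2}_+ : \|u\|_{L^2}^2 < F(u)\}$. A natural choice is $u_0^*(x) := e^{ix} + \eps e^{2ix}$ with $\eps > 0$ small, which is in $H^s_+$ for every $s$. Since $S^*u_0^* = 1+\eps e^{ix}$, the shifted Hankel operator $\tilde H_{u_0^*} = H_{1+\eps e^{ix}}$ has rank two and its spectrum reduces to a $2\times 2$ matrix computation on $\mathrm{span}(\mathds{1},e^{ix})$. A short calculation gives the two nonzero eigenvalues of $\tilde H_{u_0^*}^2$ as $(1+2\eps^2 \pm \sqrt{1+4\eps^2})/2$, hence
\begin{equation*}
F(u_0^*) = \sqrt{1+4\eps^2}, \qquad \|u_0^*\|_{L^2}^2 = 1+\eps^2,
\end{equation*}
so that $F(u_0^*) - \|u_0^*\|_{L^2}^2 = \eps^2 + O(\eps^4) > 0$ for $\eps$ sufficiently small.

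To conclude $u_0^* \in \Omega$, the strict inequality must be stable under small $H^{1/2}$-perturbations. Continuity of $u \mapsto \|u\|_{L^2}^2$ on $H^{1/2}_+$ is immediate from the Rellich embedding. For $F$, the key observation is that $u \mapsto \tilde H_u$ is continuous from $H^{1/2}_+$ into the Hilbert--Schmidt class, so by the min-max principle each ordered eigenvalue of $\tilde H_u^2$ depends continuously on $u$. Since $\tilde H_{u_0^*}^2$ has exactly two simple positive eigenvalues $\sigma_1^2 > \sigma_2^2$, a small perturbation produces two nearby simple eigenvalues $\sigma_1^2(u), \sigma_2^2(u)$ and possibly additional small ones $\sigma_k^2(u)$, $k \geq 3$, whose total mass is controlled by
\begin{equation*}
\sum_{k \geq 3} \sigma_k^2(u) \leq \operatorname{Tr}(\tilde H_u^2) - \sigma_1^2(u) - \sigma_2^2(u) \to 0 \ \text{ as } \ u \to u_0^* \ \text{ in } H^{1/2}_+.
\end{equation*}
This controls the tail of the alternating series defining $F(u)$, yielding $F(u) \to F(u_0^*)$, and hence $u_0^* \in \Omega$.

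The delicate point is this last step: the rank of $\tilde H_u$ may jump under $H^{1/2}$-perturbation, and a priori the alternating sum defining $F$ could misbehave when several new small eigenvalues appear. The trace-class continuity bound above is precisely what saves the argument, and it is the one step I would write out most carefully in a full proof.
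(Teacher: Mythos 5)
Your proposal is correct and follows essentially the same route as the paper: the blow-up claim is reduced to Theorem \ref{Exploding}, and the nonemptiness of $\Omega\cap H^s_+$ is obtained by exhibiting an explicit smooth datum with $\Vert u\Vert_{L^2}^2<F(u)$ at which $F$ is continuous because $\tilde H_u^2$ has simple nonzero spectrum. The only differences are that the paper uses the rank-one witness $u(x)=e^{ix}/(1-pe^{ix})$, for which $F(u)=(1-|p|^2)^{-2}>\Vert u\Vert_{L^2}^2=(1-|p|^2)^{-1}$, instead of your rank-two polynomial, and that it leaves the continuity of $F$ to ``elementary perturbation theory'', which your Hilbert--Schmidt/trace-bound argument correctly supplies in detail.
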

\begin{proof}
By elementary perturbation theory, it is easy to prove that 
function $F$ is continuous at those $u$ of $H^{1/2}_+(\T)$ such that $\tilde H_u^2$ has simple non zero spectrum. Furthermore, in the
particular case
$$u(x)=\frac{{e}^{ix}}{1-p\, {e}^{ix}}, \; |p|<1,\; p\neq 0,$$ it is easy to check that $\tilde H_u^2$ has rank one with $\frac 1{(1-|p|^2)^2}$ as simple eigenvalue.
As $\Vert u\Vert_{L^2}^2=\frac 1{1-|p|^2}$, this function belongs to $\Omega $, and moreover it belongs to every $H^s$. 
In view of Theorem \ref{Exploding}, this completes the proof.
\end{proof}

We give a simple class of functions in $\Omega$.

\begin{example}
The set of functions $u_0$ whose nonzero eigenvalues of $H_{u_0}^2$ and $\tilde H_{u_0}^2$ are all simple, and form the decreasing square summable list
$$\rho_1>\sigma_1>\rho_2>\sigma_2>\dots $$
with
$$\sum_j \rho_j^2<2\sum_{k\, {\rm odd}}\sigma_k^2$$
is a subset of $\Omega$.
\end{example}
\subsection{Elements of \texorpdfstring{$\Omega$}{} generating stationary solutions for \texorpdfstring{$\beta=1$}{}}\label{StationaryOmega}

Finally, we complete the proof of Theorem \ref{main} by constructing rational elements in $\Omega$ which generate stationary solutions of the truncated Szeg\H{o} equation. This example illustrates the contrast of the case $\beta=1$ with the other cases. More specifically, we give an example of a rational datum $u$ as in the above example and  satisfying
\begin{equation}\label{condition}
(u\vert 1)=(H_u^2(u)\vert 1)=0\ .
\end{equation} From \cite{GG17}, there exists a system of coordinates in which any rational function is characterized by a finite sequence $(s_{j}^2,\Psi_j)_j$ (see paragraph \ref{Hankel}) and conversely, to any such finite sequence corresponds a unique rational function.
We seek $u$ such that $H_u$ has three simple singular values $\rho_1,\rho_2,\rho_2$, and $\tilde H_u$ has three simple singular values $\sigma_1,\sigma_2,\sigma_3$ with 
$$\rho_1>\sigma_1>\rho_2>\sigma_2>\rho_3>\sigma_3=0\ .$$
Denote by $u_j, j=1,2,3$ the projection of $u$ onto $\ker (H_u^2-\rho_j^2)$. Then we know that there exist $\varphi_j\in \T $ such that
$$e^{i\varphi_j}H_u(u_j)=\rho_ju_j \ .$$
Consequently,
$$(u\vert 1)=\sum_{j=1}^3 \frac{1}{\rho_j}e^{i\varphi_j}\Vert u_j\Vert_{L^2}^2\ ,\ (H_u^2(u)\vert 1)=\sum_{j=1}^3 \rho_je^{i\varphi_j}\Vert u_j\Vert_{L^2}^2\ .$$
Choosing $\varphi_1=\varphi_3=0\ ,\ \varphi_2=\pi \ ,$
the three conditions \eqref{condition} read
\begin{eqnarray*}
\frac{1}{\rho_2}\Vert u_2\Vert_{L^2}^2&=&\frac{1}{\rho_1}\Vert u_1\Vert_{L^2}^2+\frac{1}{\rho_3}\Vert u_3\Vert_{L^2}^2\ ,\\
\rho_2\Vert u_2\Vert_{L^2}^2&=&\rho_1\Vert u_1\Vert_{L^2}^2+\rho_3\Vert u_3\Vert_{L^2}^2\ ,\\
\rho_1^2+\rho_2^2+\rho_3^2&<&2\sigma_1^2\ .
\end{eqnarray*}
Recall that
$$\Vert u_j\Vert_{L^2}^2=\frac{(\rho_j^2-\sigma_1^2)(\rho_j^2-\sigma_2^2)\rho_j^2}{\prod_{1\leq k\ne j\leq 3}(\rho_j^2-\rho_k^2)}\ .$$
Therefore we can reformulate the problem as finding $$\rho_1>\sigma_1>\rho_2>\sigma_2>\rho_3>0$$ such that
\begin{eqnarray*}
\rho_1(\rho_1^2-\sigma_1^2)(\rho_1^2-\sigma_2^2)&=&-\rho_2(\rho_2^2-\sigma_1^2)(\rho_2^2-\sigma_2^2)=\rho_3(\rho_3^2-\sigma_1^2)(\rho_3^2-\sigma_2^2)\ ,\\
\rho_1^2+\rho_2^2+\rho_3^2&<&2\sigma_1^2\ .
\end{eqnarray*}
Let us fix $\sigma_1, \sigma_2$ such that $\sigma_1>\sigma_2>0$, and set
$$P(x):=x(x^2-\sigma_1^2)(x^2-\sigma_2^2)\ .$$
Notice that 
$$P'(0)=\sigma_1^2\sigma_2^2>0\ ,\ P'(\sigma_2)=2\sigma_2^2(\sigma_2^2-\sigma_1^2)<0\ ,\ P'(\sigma_1)=2\sigma_1^2(\sigma_1^2-\sigma_2^2)> 0\ .$$
By the inverse function theorem, for every $\eps >0$ small enough, there exist  $\rho_1(\eps )$ in a $\eps $--neighborhood of $\sigma_1$,
$\rho_2(\eps )$ in a $\eps $--neighborhood of $\sigma_2$, $\rho_3(\eps )$
in a $\eps $--neighborhood of $0$, such that
$$P(\rho_1(\eps ))=-P(\rho_2(\eps ))= P(\rho_3(\eps ))=\eps \ .$$
Furthermore, 
$$\rho_1(\eps )>\sigma_1>\rho_2(\eps )>\sigma_2>\rho_3(\eps )>0\ .$$
Consequently,
$$\rho_1(\eps )^2+\rho_2(\eps )^2+\rho_3(\eps )^2=\sigma_1^2+\sigma_2^2+O(\eps )<2\sigma_1^2$$

if $\eps $ is small enough. This completes the proof.

\section{A special case of \texorpdfstring{$\cW$}{}}
In this section, we provide a panorama of the dynamics of the damped $(\alpha,\beta)$-Szeg\H{o} equations for any fixed $(\alpha,\beta)\in \R^2$ on the  six-dimensional submanifold 
\begin{equation*}
\mathcal W:=\left \{u(x)=b+\frac {c\, {e}^{ix}}{1-p{e}^{ix}},\; b,c,p\in\C,  c\neq 0,\; |p|<1\right \}.
\end{equation*}
Recall that $\cW$ is preserved by the damped $(\alpha,\beta)$-Szeg\H{o} flow since it corresponds to the symbol of the shifted Hankel operator of rank 1.

For $u\in\cW$, we calculate the mass and the conserved momentum as follows
\begin{align*}
    \|u\|_{L^2}^2&=|b|^2+\frac{|c|^2}{1-|p|^2}, \quad  \mathcal{M}(u)=\frac{|c|^2}{(1-|p|^2)^2}.
\end{align*}
We will repeatedly use the relation between the mass and the momentum 
\begin{equation}\label{fact-1}
    \|u\|_{L^2}^2=|b|^2+\mathcal{M}(u)(1-|p|^ 2).
\end{equation}
We will consider the solutions of \eqref{DABS} with a  fixed momentum $\mathcal{M}(u(t))=M>0$. We define two subsets of $\cW$
\begin{equation*}
  \mathcal{E}_M=\{u\in\cW \mid  \mathcal{M}(u)=M\},\quad 
  \mathcal{C}_M=\{u(x)=ce^{ix} \mid |c|^2=M\}.
\end{equation*}
We observe that $\mathcal C_M\subset\mathcal{E}_M$ is invariant under the damped $(\alpha,\beta)$-flow \eqref{DABS} which consists of the periodic trajectories.

We write the damped $(\alpha,\beta)$-Szeg\H{o} equations on $\mathcal{E}_M$  in the $(b,c,p)$-coordinate as 
\begin{equation}
\label{ODE:bcp}
\left\{
\begin{aligned}
ib'+(i\nu -\alpha)b&=(|b|^2+2M(1-|p|^2))b+Mc\overline{p},\\
ic'&=2|b|^2c+2M(1-|p|^2)bp+(1-\beta)Mc,\\
ip'&=c\overline{b}+(1-\beta)Mp(1-|p|^2),
\end{aligned}
\right.
\end{equation}
where $\nu>0$ is the coefficient of the damping term in \eqref{DABS}.

We will determine all types of trajectories of the damped $(\alpha,\beta)$-Szeg\H{o} equations on $\mathcal{E}_M\setminus \mathcal C_M\subset\cW$, which consists of blow-up and scattering trajectories. By Lemma \ref{limit}, the $L^2$ norm of $u(t)$ converges 
\begin{align*}
    \|u(t)\|_{L^2}^2=|b(t)|^2+M(1-|p(t)|^2)\to Q\quad\text{and}\quad  |b(t)|=|(u(t)\vert \mathds 1)|\to 0,
\end{align*}
as $t\to+\infty$,
which implies $M(1-|p(t)|^2)\to Q$. As a consequence, $|p(t)|$ admits a limit in $[0,1]$. We claim that this limit can only be $0$ or $1$, which corresponds to the scattering trajectories or the blow-up trajectories respectively.

We prove that the limit of $|p(t)|$ can only be $0$ or $1$ by contradiction. Indeed, if  $0<\lim_{t\to+\infty}|p(t)|^2<1$, then the trajectory $\{u(t)\}$ is compact in $H^s(\T)$. As a consequence, $u(t)$ has a weak limit $u_\infty\in\cW$. By Proposition \ref{WeakLimit}, $$\lS_{\nu,\alpha,\beta}(t) u_\infty=b_\infty(t)+\frac{c_\infty(t)e^{ix}}{1-p_\infty(t)e^{ix}}$$
is a solution of the $(\alpha,\beta)$-Szeg\H{o} equation \eqref{aabszego} with $(\lS_{\nu,\alpha,\beta}(t) u_\infty\vert\mathds{1})=b_\infty(t)=0 $. Moreover, the triplet $(0,c_\infty,p_\infty)$  satisfies the ODE system $\eqref{ODE:bcp}_1$, which implies
$$Mc_\infty(t)\overline{p_\infty}(t)=0.$$
On the other hand, the momentum conservation law $$\mathcal{M}(\lS_{\nu,\alpha,\beta}(t)u_\infty)=\mathcal{M}(u_\infty)=\mathcal{M}(u_0)>0$$ 
ensures that $c_\infty$ cannot be $0$. Therefore, $p_\infty=0$,
which contradicts our assumption.

We will show that all the initial values $u_0$ with corresponding $|p(t)|\to1$ form a dense open set of $\cW$, on which the growth of the $H^s$ norm of $ \lS_{\nu,\alpha,\beta}(t)u_0$ is of order $t^{s-\frac12}$ as $t\to+\infty$. We remark that, as a consequence of Theorem \ref{Exploding}, those initial values $u_0$ satisfy 
\begin{equation}
\label{CondExplode}
\| \lS_{\nu,\alpha,\beta}(t)u_0\|_{L^2}^2<F(u_0)
\end{equation}  
for some $t$ in the case $\beta\neq1$. And $F(u_0)=\mathcal{M}(u_0)=M$ when $u_0\in\mathcal W$, so that condition \eqref{CondExplode} reads 
\begin{equation*}
\Vert \lS_{\nu,\alpha,\beta}(t)u_0\Vert_{L^2}^2<M    
\end{equation*}
for some $t$. 

We will show that the case $|p(t)|\to0$ corresponds to trajectories which exponentially converge to $\mathcal{C}_M$. The conserved momentum $\mathcal{M}(u)=\frac{|c|^2}{(1-|p|^2)^2}=M$ implies that
$$|c(t)|^2\to M.$$ 
On the other hand, the decay of $|b(t)|$ (showed in Lemma \ref{limit}) implies that
\begin{equation*}
    \|u(t)\|_{L^2}^2=|b(t)|^2+M(1-|p(t)|^2)\to M.
\end{equation*}
Since $\|u(t)\|_{L^2}^2$ decays monotonically, to study the non-periodic trajectories corresponding to $|p(t)|\to0$, one only needs to study the trajectories $\{u(t)\}$ disjoint from $\cC_M$ and 
$$\|u(t)\|_{L^2}^2\ge M,\quad \forall t\ge0.$$ 

The following theorem of the alternative holds:
\begin{theorem} \label{twocases}
Let $\nu>0$ and $\alpha,\beta\in\R$. Then there exists a three dimensional submanifold $\Sigma_{M,\nu,\alpha,\beta}\subset \mathcal{E}_M$, disjoint from $\mathcal{C}_M$ and invariant under the flow $\lS_{\nu,\alpha,\beta}(t)$, such that $\Sigma_{M,\nu,\alpha,\beta}\cup \,\mathcal{C}_M$ is closed  and the following holds:
\begin{enumerate}
    \item If $u_0\in \mathcal{E}_M\setminus (\Sigma_{M,\nu,\alpha,\beta}\cup \,\mathcal{C}_M)$, then $\|\lS_{\nu,\alpha,\beta}(t)u_0 \|_{H^s}^2$ blows up with the rate  \begin{equation}\label{blowup}
        \|\lS_{\nu,\alpha,\beta}(t)u_0 \|_{H^s}^2\sim a^2t^{2s-1},\quad s>\frac12,
    \end{equation}
    where 
    \begin{equation*}
    a^2(s,\nu,\alpha,\beta,M)
    =\Gamma(2s+1)M^{4s-1}\left(\frac{\nu^2+((1-\beta)M-\alpha)^2}{2\nu}\right)^{1-2s} .  
    \end{equation*}
    \item If $u_0\in \Sigma_{M,\nu,\alpha,\beta}$, then $\lS_{\nu,\alpha,\beta}(t) u_0$ tends to $\mathcal{C}_M$ as $t\to+\infty$, and 
    \begin{equation*}
        \text{dist}\ (\lS_{\nu,\alpha,\beta}(t) u_0,\mathcal{C}_M)\sim e^{-\frac{\nu+\sigma}{2}t},
    \end{equation*}
    where $\sigma=\left(\frac{(\nu^2-\alpha^2-4M\alpha)+\sqrt{(\nu^2-\alpha^2-4M\alpha)^2+4\nu^2( 2M+\alpha)^2}}{2}\right)^{\frac12}\ge \nu$.
    \end{enumerate}
\end{theorem}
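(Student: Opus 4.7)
Plan for Theorem \ref{twocases}. The plan is to analyse the six-real-dimensional ODE system \eqref{ODE:bcp} restricted to the five-dimensional momentum level $\mathcal{E}_M$, and to identify $\Sigma_{M,\nu,\alpha,\beta}$ as the stable manifold of the periodic orbit $\mathcal{C}_M$ under the damped $(\alpha,\beta)$-Szeg\H{o} flow, minus $\mathcal{C}_M$ itself. The dichotomy $|p(t)|\to 0$ or $|p(t)|\to 1$ has already been established in the discussion preceding the theorem, so the two branches (2) and (1) of the statement correspond to these two alternatives respectively.

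For case (1) I would introduce $\xi:=1-|p|^2$ and $\zeta:=\bar b c\bar p$, together with the momentum constraint $|c|^2=M\xi^2$. A direct differentiation of \eqref{ODE:bcp} yields the closed system
\begin{align*}
\dot\xi &= -2\,\mathrm{Im}\,\zeta,\\
\tfrac{d}{dt}|b|^2 &= -2\nu|b|^2+2M\,\mathrm{Im}\,\zeta,\\
\dot\zeta &= -\nu\zeta + i\bigl((\alpha-(1-\beta)M)-|b|^2+(3-\beta)M\xi\bigr)\zeta + iM^2\xi^2(1-\xi) + iM\xi|b|^2(3\xi-2),
\end{align*}
the second line being the Lyapunov identity \eqref{Lyapunov} in disguise. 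In the regime $\xi,|b|^2\to 0$, the $\zeta$-equation is a damped complex linear oscillator with damping $\nu$, frequency $(1-\beta)M-\alpha$ and forcing $iM^2\xi^2$; its quasi-steady solution is
$$\zeta_{\mathrm{qs}}=\frac{iM^2\xi^2}{\nu-i(\alpha-(1-\beta)M)},\qquad \mathrm{Im}\,\zeta_{\mathrm{qs}}=\frac{\nu M^2\xi^2}{\nu^2+((1-\beta)M-\alpha)^2}.$$
Plugging into $\dot\xi=-2\,\mathrm{Im}\,\zeta$ and integrating $-d(1/\xi)/dt$ produces $\xi(t)\sim K/t$ with $K=\frac{\nu^2+((1-\beta)M-\alpha)^2}{2\nu M^2}$. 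The Sobolev asymptotics then follow from the Fourier expansion $\hat u(k)=cp^{k-1}$ for $k\ge 1$ combined with the elementary estimate $\sum_{k\ge 1}k^{2s}(1-\xi)^{k-1}\sim\Gamma(2s+1)\xi^{-(2s+1)}$ as $\xi\to 0^+$, giving $\|u\|_{H^s}^2\sim M\Gamma(2s+1)\xi^{1-2s}\sim a^2 t^{2s-1}$ after a short algebraic check that $MK^{1-2s}$ matches the constant in the statement.

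For case (2), the orbit $\mathcal{C}_M$ is periodic with frequency $(1-\beta)M$ under the flow by \eqref{ODE:bcp}$_2$. I would remove this time-periodicity by setting $\delta p=e^{-i(1-\beta)Mt}\tilde p$, $\delta b=\tilde b$ in the linearisation around $c_0=\sqrt M\in\mathcal{C}_M$, which renders the transverse linearisation autonomous with matrix
$$A=\begin{pmatrix}-\nu-i(\alpha+2M) & -iM^{3/2}\\ i\sqrt M & 0\end{pmatrix}$$
in the coordinates $(\tilde b,\overline{\tilde p})$. A short computation yields the discriminant
$$T^2-4\det A=(\nu^2-\alpha^2-4\alpha M)+2i\nu(\alpha+2M),$$
which, notably, is independent of $\beta$. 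Hence the eigenvalues have real parts $-\nu/2\pm\sigma/2$ with $\sigma$ as in the statement, and the bound $\sigma\ge\nu$ reduces to the identity $(\alpha+2M)^2-\alpha^2-4\alpha M=4M^2\ge 0$. Since $\sigma>\nu$ whenever $M>0$, the orbit $\mathcal{C}_M$ is normally hyperbolic with one transverse complex-stable and one transverse complex-unstable direction; applying the stable manifold theorem for compact normally hyperbolic invariant manifolds produces a smooth $2$-dim transverse stable bundle, which together with $\mathcal{C}_M$ assembles into the $3$-dim closed invariant submanifold $\Sigma_{M,\nu,\alpha,\beta}\cup\mathcal{C}_M$, with decay rate $e^{-(\nu+\sigma)t/2}$. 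By the preliminary dichotomy, any trajectory with $|p(t)|\to 0$ converges to $\mathcal{C}_M$ and thus lies on this stable manifold, yielding the characterisation of $\Sigma_{M,\nu,\alpha,\beta}$.

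The main obstacle I expect is the rigorous justification of $\xi(t)\sim K/t$ in case (1): the reduced $(\xi,|b|^2,\zeta)$-system has no strict Lyapunov functional of its own beyond the $L^2$ identity, so $\zeta$ must be trapped near $\zeta_{\mathrm{qs}}$ while $|b|^2$ is simultaneously controlled as an $o(\xi)$ correction. I would carry this out by a bootstrap on $[T,\infty)$ for $T$ large, monitoring the three small quantities $|\zeta-\zeta_{\mathrm{qs}}|/\xi^2$, $|b|^2/\xi$ and $|t\xi-K|$, and closing Gr\"onwall estimates using the genuine linear damping $-\nu\zeta$ in the $\zeta$-equation.
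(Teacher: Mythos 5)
Your computations all check out against the paper: the reduced $(\xi,|b|^2,\zeta)$-system is (up to the normalisation $\gamma=M\xi$, $\zeta_{\mathrm{paper}}=M\zeta$) exactly the paper's system \eqref{ODE:bdz11}, the quasi-steady value of $\zeta$ and the constant $K=\kappa/M$ give the stated $a^2$, and the transverse linearisation at $\mathcal{C}_M$ (trace, determinant, discriminant, real parts $\tfrac{-\nu\pm\sigma}{2}$, and the reduction of $\sigma>\nu$ to $4M^2>0$) reproduces the paper's $\lambda_\pm$ and Appendix computation. For part (1) your route is essentially the paper's; the one place where your plan is more fragile is the closing step: the paper integrates the imaginary part of the $\zeta$-equation into the integral relation $\gamma(t)=\tfrac{1}{\kappa}\int_t^\infty\gamma^2\,(1+o(1))+o(\sup_{s\ge t}\gamma)$ and solves it using only $|b|^2\in L^1(\R_+)$ and $\zeta=o(\gamma)$ (both supplied by Lemma \ref{limit}), so it never needs the pointwise smallness of $|b|^2/\xi$ that your bootstrap monitors and which is not available a priori; I would adopt their formulation. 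For part (2) you take a genuinely different route: the paper does not invoke normally hyperbolic invariant manifold theory, but instead defines $\Sigma_{M,\nu,\alpha,\beta}=\{u_0\in\mathcal{E}_M\setminus\mathcal{C}_M:\ \|\lS_{\nu,\alpha,\beta}(t)u_0\|_{L^2}^2\ge M\ \text{for all}\ t\ge0\}$ (manifestly flow-invariant and closed together with $\mathcal{C}_M$), uses this $L^2$ constraint to force the coefficient $A_+$ of the growing mode $e^{\lambda_+t}$ to vanish asymptotically, then runs a Duhamel fixed-point argument on the $4\times4$ real system for $(\eta,\delta,\zeta)$ to obtain the two-sided rate and an explicit parametrisation $J:(0,\infty)\times\T\times\T\to\Sigma_{M,\nu,\alpha,\beta}$ whose properness and immersivity give the submanifold structure. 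Your approach buys the local stable manifold and its smoothness from general theory, but you still owe three items that the theorem does not hand you and which constitute the actual content of the paper's Steps 1--4: (a) that the \emph{global} stable set is an embedded closed submanifold (the paper gets this from properness of $J$; alternatively, observe that your $\Sigma_{M,\nu,\alpha,\beta}\cup\mathcal{C}_M$ coincides with the closed set displayed above); (b) the \emph{lower} bound in $\mathrm{dist}(\lS_{\nu,\alpha,\beta}(t)u_0,\mathcal{C}_M)\sim e^{-(\nu+\sigma)t/2}$, which holds because both stable eigenvalues have real part exactly $-\tfrac{\nu+\sigma}{2}$ so a nonzero stable component cannot decay faster --- this should be said explicitly; and (c) the identification of $\{u_0: |p(t)|\to0\}$ with the stable manifold, i.e.\ that the local stable \emph{set} equals the local stable \emph{manifold}. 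None of these is a fatal obstacle, and your strategy is viable, but as written it compresses precisely the parts of the argument that the paper spends its effort on.
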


{ This alternative behavior holds for all $(\nu,\alpha,\beta)$, which is consistent with the dynamics of the damped Szeg\H{o} equation $(\alpha=\beta=0)$. Indeed, we will follow a similar argument as for the damped Szeg\H{o} equation \cite{GG20} to show the above theorem and mainly point out the differences.}
The first and second parts of the above theorem will be proved in Subsection 4.1 and Subsection 4.2 respectively.


\subsection{When \texorpdfstring{$|p(t)|\to1$}{}}
 We introduce a reduced system with  
\begin{equation*}
  \eta=|b|^2,\quad\gamma=M(1-|p|^2),\quad \zeta=Mc\overline{bp} , 
\end{equation*}
which satisfy the following ODE system  
\begin{equation}
\label{ODE:bdz11}
\left\{
\begin{aligned}
\eta'+2\nu \eta&=2\text{Im} \zeta,\\
\gamma'&=-2\text{Im} \zeta,\\
\zeta'+( \nu+i(1-\beta)M-i\alpha)\zeta&=i\zeta((3-\beta)\gamma-\eta)-2i\eta\gamma M\\
&\quad+i\gamma^2(M-\gamma+3\eta).
\end{aligned}
\right.
\end{equation}
For $u\in\cW$, notice that $\hat u(k)=cp^{k-1}$, $k\ge 1$, then we have
\begin{align*}
\|u(t)\|_{H^s}^2&=\sum_{k=0}^\infty (1+k^2)^{s}|\hat u(k)|^2\\
&\sim 
\frac{\Gamma(2s+1)M}{(1-|p(t)|^2)^{2s-1}}
=\Gamma (2s+1)M^{2s}\gamma(t)^{1-2s}.
\end{align*}
Hence, we only need to show
\begin{equation} \label{gamma:decay}
    \gamma(t)\sim \frac{\kappa}{t},\quad\kappa=\frac{\nu^2+((1-\beta)M-\alpha)^2}{2\nu M}
\end{equation}
to obtain \eqref{blowup}
\begin{equation*}
        \|u(t)\|_{H^s}^2\sim a^2t^{2s-1}
    \end{equation*}
   with
   \begin{equation*}
       a^2(s,\nu,\alpha,\beta,M)=\Gamma(2s+1)M^{2s}\kappa^{1-2s}.
   \end{equation*}

We observe some facts in this case. The conserved momentum implies that $1-|p|^2$ and $|c|$ decay with the same rate. The integrability and decay of $|b|$ were given in Lemma \ref{limit}. As a consequence, in $(\eta,\gamma,\zeta)$-coordinate one has
\begin{equation}\label{fact0}
    \eta\in L^1(\R_+) \quad \text{and} \quad \zeta={o}(\gamma).
\end{equation}

To show \eqref{gamma:decay}, we take the imaginary part of $\zeta$ equation and use $\gamma'=-2\text{Im} \zeta$ to derive
\begin{equation}\label{equation:zeta}
    \frac{\text{Im}\zeta'}{\nu+i((1-\beta)M-\alpha)}-\frac{\gamma'}{2}=\text{Im}f+\text{Im}r,
\end{equation}
where  
\begin{align*}
    f=i\frac{\gamma^2}{\nu+i((1-\beta)M-\alpha)}(M-\gamma+(3-\beta)\frac{\zeta}{\gamma})
    \end{align*}
    and 
    \begin{align*}
  r=-i\frac{\eta}{\nu+i((1-\beta)M-\alpha)}(\zeta+2\gamma M-3\gamma^2).
\end{align*}
The boundedness of $\eta,\gamma,\zeta$ and the integrability of $\eta$ ensure that $r\in L^1(\R_+)$. As a consequence of \eqref{equation:zeta}, one has  $\text{Im}f\in L^1(\R_+)$. Furthermore, the structure of $f$ ensures that $\gamma^2\in L^1(\R_+)$.

Now, one can integrate the both side of \eqref{equation:zeta} to derive 
\begin{equation*}
 (1+o(1))\gamma(t)=2\int_t^\infty\text{Im}f+2\int_t^\infty\text{Im}r,
\end{equation*}
where in the left-hand side, we use the fact that $\zeta=o(\gamma)$. Computing the right hand side, we have
    \begin{equation*}
    \int_t^\infty \text{Im} f(s)\,ds= \frac{1}{2\kappa}\Big(\int_t^\infty\gamma(s)^2\,ds\Big) (1+o(1))
    \end{equation*}
    and 
    \begin{equation*}
   \int_t^\infty \text{Im}\ r(s)\,ds= O\Big(\int_t^\infty\eta(s)\gamma(s)\,ds\Big)= o(\sup_{s\ge t}\gamma(s)), \end{equation*}
where the last equality holds due to the integrability of $\eta$. 

Now we arrive at
\begin{equation}\label{gamma}
\gamma(t)= \frac{1}{\kappa}\Big(\int_t^\infty\gamma(s)^2\,ds\Big) (1+o(1))+o(\sup_{s\ge t}\gamma(s)).
\end{equation}
We take $\sup_{s\ge t}$ on the above equality to get
\begin{equation*}
  \sup_{s\ge t}\gamma(s)= \frac{1}{\kappa}\Big(\int_t^\infty\gamma(s)^2\,ds\Big) (1+o(1)).
\end{equation*}
Inserting this equality in the equation \eqref{gamma} gives 
\begin{equation*}
  \gamma(t)= \frac{1}{\kappa}\Big(\int_t^\infty\gamma(s)^2\,ds\Big) (1+o(1)).
\end{equation*}
Solving this integral equation gives
\begin{equation*}
 \gamma(t)=\frac{\kappa}{t}(1+o(1))
\end{equation*}
as desired.


\subsection{When  \texorpdfstring{$|p(t)|\to0$}{}}
In this subsection, we investigate the trajectories 
$\{u(t)\}$ with momentum $M$, which do not lie in $\cC_M$ but converge to it. 
As a consequence of Lemma \ref{limit} , the $L^2$ norm of $u(t)$ decays to the momentum $M$, namely 
\begin{align*}
    &\|u(t)\|_{L^2}^2=|b(t)|^2+M(1-|p(t)|^2)\to M,\\
    \text{and} & \quad  \|u(t)\|_{L^2}^2\ge M,\quad \forall t\ge 0.
\end{align*}
We first define the set $\Sigma_{M,\nu,\alpha,\beta}\subset\mathcal{E}_M$ as following
\begin{equation}\label{set:S}
   \Sigma_{M,\nu,\alpha,\beta}=\{u_0\in \mathcal{E}_M\setminus \mathcal{C}_M\mid  \|\lS_{\nu,\alpha,\beta}(t)u_0\|_{L^2}^2\ge M ,\,\forall t\ge 0\}.
\end{equation}
At the end of this subsection, we will see that $\Sigma_{M,\nu,\alpha,\beta}$ is a three-dimensional submanifold in $\mathcal{E}_M$. We first observe some facts of the trajectories with $u_0\in \Sigma_{M,\nu,\alpha,\beta}$:
 \begin{itemize}
    \item Since $ \|u(t)\|_{L^2}^2=|b|^2+M(1-|p|^2)\ge M$, one has
    \begin{equation}\label{fact:1}
      |b(t)|^2\ge M|p(t)|^2.  
    \end{equation}
    \item One has 
    \begin{equation}\label{fact:2}
   b(t)\neq 0,\quad  \forall t\in \R.
    \end{equation}
    Otherwise, $b$ and $p$ would cancel at the same time and the trajectory would not  be  disjoint from $\mathcal C_M$.
    
   \item By Lemma \ref{limit}, the inequality \eqref{fact:1}, and the identity $|c(t)|^2=M(1-|p(t)|^2)^2$, one has   
   \begin{equation}\label{fact:3}
 |b(t)|^2,\,|p(t)|^2\in L^1(\R_+) \quad\text{and}\quad |c(t)|^2\in L^\infty(\R_+)
 \end{equation}

 \end{itemize}

We are going to show the second part of Theorem \ref{twocases}
in the following four steps: In step 1, we show that $u(t)$ converges to $\mathcal{C}_M$ as $t\to+\infty$; In step 2, we establish a scattering property of a reduced system related to $b,c,p$; In step 3, the asymptotic behavior of $u(t)$ can be recovered on the basis of step 2. The geometric structure of $\Sigma_{M,\nu,\alpha,\beta}$ will be discussed in step 4.

\vspace{5mm}
{\bf{Step 1: Convergence of $c(t)$ and $\frac{\overline{p(t)}}{b(t)}$.}}
We show that there exists $\theta\in\mathbb{T}$ such that 
\begin{equation*}
\label{convergence:c}
 e^{itM(1-\beta)}c(t)\to c_\infty=\sqrt M e^{i\theta}   
\end{equation*}
 and 
 \begin{equation}
 \label{convergence:p,b}
 e^{-itM(1-\beta)}\sqrt{M} \frac{\overline{p(t)}}{b(t)}
\to \left(\frac{\varsigma\rho-\alpha+i(\nu-\sigma)}{2M}-1\right)e^{-i\theta} 
 \end{equation}
 as $t\to+\infty$.
Here 
\begin{equation}\label{sign}\varsigma=\text{sgn}\,(\alpha+2M)\in\{-1,1\}
\end{equation} 
and $\sigma,\rho$ are real non negative numbers satisfying
\begin{equation}\label{rhosigma}\sigma^2-\rho^2=\nu^2-\alpha^2-4\alpha M\text{ and } \varsigma\sigma\rho=\nu(\alpha+2M).
\end{equation}
In particular 
 \begin{equation}\label{sigmaa}
      \sigma=\left(\frac{(\nu^2-\alpha^2-4M\alpha)+\sqrt{(\nu^2-\alpha^2-4M\alpha)^2+4\nu^2( \alpha+2 M)^2}}{2}\right)^{\frac12}.
\end{equation}

 We first derive the convergence of $c(t)$. The $c(t)$-equation in the ODE system \eqref{ODE:bcp} implies that 
 \begin{equation}\label{ODE:c}
 i\frac{d}{dt}(e^{itM(1-\beta)}c(t))=e^{itM(1-\beta)}\big[2|b(t)|^2c(t)+2M(1-|p(t)|^2)b(t)p(t)\big].
 \end{equation}
The fact \eqref{fact:3} ensures the integrability of the right-hand side of the above ODE, together with the  conserved momentum, one obtains that
\begin{equation*}
    e^{itM(1-\beta)}c(t)\to c_\infty=\sqrt M e^{i\theta},\quad \theta\in\T.
\end{equation*}
Since $|b(t)|\to0$, we choose $\varepsilon=|b(T)|$ for some $T>>1$. As a consequence of \eqref{fact:1}, one has $|p(T)|\le\frac{\varepsilon}{\sqrt{M}}$.
Furthermore, we claim that 
\begin{equation*}
 |c(T)-c_\infty e^{-iTM(1-\beta)}|= O(\int_T^\infty |b(t)|^2\,dt)=O(\varepsilon^2).
 \end{equation*}
 Indeed, in the above equation, the first equality holds by integration of \eqref{ODE:c}. For the second estimate, one integrates the Lyapunov functional  \eqref{Lyapunov}
 $$\frac{d}{dt}(|b(t)|^2+M(1-|p(t)|^2))=-2\nu |b(t)|^2\ ,$$
from $T$ to $\infty$ to get
 \begin{equation*}
 O(\varepsilon^2)=|b(T)|^2-M|p(T)|^2=2\nu \int_T^\infty |b(t)|^2\,dt.
 \end{equation*}
 It proves the second estimate.

We combine these estimates of $b,c,p$ at time $T$ and the structure of $u$ on $\mathcal{W}$ to obtain the following estimates in any $H^s(\T)$
\begin{equation}\label{dist:2}
 \begin{aligned}
     &\text{dist}\ (u(T), c_\infty e^{-iTM(1-\beta)}e^{ix})=O(\varepsilon) ,\\
     &\text{dist} \ (u(T), b(T)+c_\infty e^{-iTM(1-\beta)}e^{ix}+c_\infty e^{-iTM(1-\beta)}p(T)e^{i2x})=O(\varepsilon^2).
 \end{aligned}    
\end{equation}

Now we are ready to show the convergence of $\sqrt{M}\frac{\overline{p(t)}}{b(t)}$ by a  linearisation argument. Roughly speaking, we use the convergence \eqref{dist:2} to linearise the trajectories after time $T$ and check the $L^2$-norm of the solution. For reader's convenience, we mention that a baby example of the linearisation procedure around solutions with periodic trajectories for the damped Szeg\H{o} equation was provided in \cite{GG20}.

As a consequence of \eqref{dist:2}, we study the following trajectory
\begin{equation*}
     \lS_{\nu,\alpha,\beta}(t) u(T)(x)=e^{-itM(1-\beta)}(c_\infty e^{-iTM(1-\beta)}e^{ix}+\varepsilon v(t,x)+\varepsilon^2w(t,x)),
 \end{equation*}
 where $w$ is uniformly bounded in the sense
 \begin{equation*}
     \|w(t)\|_{H^s}\le C_{s,R},\quad\forall t\in[0,R].
 \end{equation*}
   To derive the equation $v$ satisfied, we calculate the following quantities
\begin{align*}
&e^{itM(1-\beta)}\d_t u^\varepsilon(t)=-iM(1-\beta)(c_\infty e^{-iTM(1-\beta)}e^{ix}+\varepsilon v)+\varepsilon\d_t v+O(\varepsilon^2),\\
 &e^{itM(1-\beta)}( u^\varepsilon(t)\vert\mathds{1})=\varepsilon  (v\vert\mathds{1})+O(\varepsilon^2),\\
&e^{itM(1-\beta)}\Pi(| u^\varepsilon(t)|^2 u^\varepsilon(t))
=Mc_\infty e^{-iTM(1-\beta)} e^{ix}+\varepsilon 2 M  v\\
&\quad\quad\quad\quad\,\quad\quad\quad\quad\quad\quad\quad+\varepsilon c_\infty^2\Pi(e^{-i2TM(1-\beta)}e^{i2x}\overline{v})+O(\varepsilon^2),\\
&e^{itM(1-\beta)}S\Pi(|S^* u^\varepsilon(t)|^2 S^*u^\varepsilon(t))=Mc_\infty e^{-iTM(1-\beta)} e^{ix}+\varepsilon 2Me^{ix}\Pi(e^{-ix}v)\\
&\quad\quad\quad +\varepsilon c_\infty^2 e^{-i2TM(1-\beta)}e^{ix}\Pi(e^{ix}\overline{v})-\varepsilon c_\infty^2 e^{-i2TM(1-\beta)} e^{i2x}\overline{(v\vert\mathds{1})}+O(\varepsilon^2).
\end{align*}
Then $v$ satisfies the equation
 \begin{equation*}
 \begin{aligned}
 i\d_t v + (i\nu-\alpha)(v\vert \mathds{1}) 
 =&c_\infty^2e^{-i2TM(1-\beta)}\Pi(e^{i2x}\overline{v})+ (\beta+1) M  v\\
&-\beta c_\infty^2 e^{-i2TM(1-\beta)}e^{ix}\Pi(e^{ix}\overline{v})-2 \beta Me^{ix}\Pi(e^{-ix}v)\\
&+\beta c_\infty^2 e^{-i2TM(1-\beta)} e^{i2x}\overline{(v\vert\mathds{1})}.
 \end{aligned}
 \end{equation*}
 with the initial value $v(0,x)=\frac{b(T)}{\varepsilon}+c_\infty e^{-iTM(1-\beta)}\frac{p(T)}{\varepsilon}e^{i2x}$.
We observe the equation of $v$ and make the ansatz
\begin{equation*}
    v(t,x)=q_0(t)+q_1(t)e^{ix}+q_2(t)e^{i2x},
\end{equation*}
where
  \begin{align*}
     iq_0'+(i\nu-\alpha)q_0&=(1+\beta)Mq_0+c_\infty^2e^{-2iTM(1-\beta)}\overline q_2,\\
     iq_1'&=(1-\beta) (c_\infty^2e^{-2iTM(1-\beta)}\overline q_1+Mq_1),\\
     iq_2'&=(1-\beta)Mq_2+c_\infty^2e^{-2iTM(1-\beta)}\overline q_0,\\
      q_0(0)=\frac{b(T)}{\varepsilon},\,&q_2(0)=c_\infty e^{-iTM(1-\beta) }\frac{p(T)}{\varepsilon}.
 \end{align*}
We take the derivative of the $q_0$-equation and substitute the equation of $q_2$ to derive
 \begin{align*}
    &q_0''+(\nu +i(\alpha+2\beta M))q_0'-((1-\beta)(i\nu-\alpha)M+\beta^2 M^2)q_0=0,\\
    & q_0(0)=\frac{b(T)}{\varepsilon},\\
    &q_0'(0)= -(\nu+i(\alpha+(1+\beta)M))\frac{b(T)}{\varepsilon}-iMc_\infty e^{-iTM(1-\beta)}\frac{\overline{p(T)}}{\varepsilon}. 
    \end{align*}
 The characteristic equation of this second-order ODE reads 
\begin{equation*}
    \lambda^2+(\nu +i(\alpha+2\beta M))\lambda-((1-\beta)(i\nu-\alpha)M+\beta^2 M^2)=0.
\end{equation*}
The solutions are given by 
 \begin{equation*}
     \lambda_{\pm}=\frac{-(\nu+i(\alpha+2\beta M))\pm(\sigma+i\varsigma\rho )}{2},
 \end{equation*}
 where $\varsigma$, $\rho, \sigma$ are defined by equations \eqref{sign}, \eqref{rhosigma}.

We will prove in Appendix \ref{calculation} that 
$$\frac{\sigma-\nu}{\sigma+\nu}> 0$$ so that $\sigma>\nu$. Hence, the real parts of $\lambda_+$ and $\lambda_-$ admits different signs as
 $$\text{Re}(\lambda_+)=\sigma-\nu>0\quad\text{and}\quad \text{Re}(\lambda_-)=-\sigma-\nu<0.$$
And  hence, the solution $q_0$ is given by
 \begin{equation}\label{q0}
     q_0(t)=A_+e^{\lambda_+t}+A_-e^{\lambda_-t},
 \end{equation}
 where
  \begin{align*}
     A_+(T)&=\frac{q_0'(0)-\lambda_-q_0(0)}{\lambda_+-\lambda_-}\\
     &=-\frac{(\nu+i(\alpha+(1+\beta)M)+\lambda_-)b(T)+iMc_\infty e^{-iTM(1-\beta)}\overline{p(T)}}{\varepsilon(T)(\sigma+i\varsigma\rho)},\\
     A_-(T)&=\frac{\lambda_+q_0(0)-q_0'(0)}{\lambda_+-\lambda_-}\\
    &=\frac{(\nu+i(\alpha+(1+\beta)M)+\lambda_+)b(T)+iMc_\infty e^{-iTM(1-\beta)}\overline{p(T)}}{\varepsilon(T)(\sigma+i\varsigma\rho)}.
 \end{align*}
 
 Now we are ready to check the $L^2$-norm of $\lS_{\nu,\alpha,\beta}(t) u(T)$, especially the two important features: $\|\lS_{\nu,\alpha,\beta}(t) u(T)\|_{L^2}^2\ge M$ and the Lyapunov functional. We fix $T$, the following estimate holds for all $t\in[0,R]$
 \begin{align*}
    0&\le  \|\lS_{\nu,\alpha,\beta}(t) u(T)\|_{L^2}^2-M\\
    &=\|u(T)\|_{L^2}^2-M-2\nu \int_0^t|(\lS_{\nu,\alpha,\beta}(s) u(T)\vert\mathds{1})|^2\,ds\\
     &=|b(T)|^2-M|p(T)|^2-2\nu  \varepsilon^2\int_0^t\big(|(v(s)\vert\mathds{1})|^2+O_{R}(\varepsilon)\big)\,ds.
 \end{align*}
We divide the above inequality both sides by $|b(T)|^2$ to derive
 \begin{align*}
     1-\underbrace{\frac{M|p(T)|^2}{|b(T)|^2}}_{\le 1}-2\nu \int_0^t|\underbrace{(v(s)\vert\mathds{1})}_{=q_0(s)}|^2\,ds\ge -c_R\varepsilon(T),\quad \forall t\in[0,R],
 \end{align*}
 where $c_R$ is a constant depending only on $R$.  
Recall $ q_0(t)=A_+(T)e^{\lambda_+t}+A_-(T)e^{\lambda_-t}$. By computing the integral of $q_0$ and using the above estimate, we infer the existence of a constant $B$ such that 
 \begin{equation*}
       |A_+(T)|^2e^{(\sigma-\nu)R}\le c_R\varepsilon(T)+B,
 \end{equation*}
 We first take upper limit in $T$ of the above inequality
  \begin{equation*}
       \limsup_{T\to+\infty}|A_+(T)|^2e^{(\sigma-\nu)R}\le B.
 \end{equation*}
 Then we take limit in $R\to\infty$, the above inequality holds only if 
\begin{align*}
    \limsup_{T\to+\infty}{|A_+(T)|^2}=0,
\end{align*}
   which implies the convergence \eqref{convergence:p,b}
\begin{align*}
    e^{-iTM(1-\beta)}\sqrt{M} \frac{\overline{p(T)}}{b(T)}
&\to i\frac{\nu+\lambda_-+i(\alpha+(1+\beta)M)}{M}e^{-i\theta}\\
&=(\frac{\varsigma\rho-\alpha+i(\nu-\sigma)}{2M}-1)e^{-i\theta}
\end{align*}
as $T\to+\infty$.

\vspace{5mm}
{\bf{Step 2: Scattering properties for $\eta,\delta,\zeta$ }}
We write 
\begin{equation*}
 \eta=|b|^2,\,\delta=M|p|^2,\,\zeta=Mc\overline{pb},
 \end{equation*}
which satisfy the ODE system 
\begin{equation}
\label{ODE:bdz1}
\left\{
\begin{aligned}
\eta'+2\nu \eta&=2\text{Im} \zeta,\\
\delta'&=2\text{Im} \zeta,\\
 \zeta'+(\nu-i(2M+\alpha))\zeta&=-i((3-\beta)\delta+\eta)\zeta-2i\eta\delta(M-\delta)\\
 &\quad +i(M-\delta)^2(\delta+\eta).
\end{aligned}
\right.
\end{equation}

Due to the decay and boundedness of $(b,c,p)$ in \eqref{fact:3}, we have  $(\eta(t),\delta(t),\zeta(t))\to(0,0,0)$ and $\eta\in L^1(\R_+)$.
The Lyapunov functional in the $(\eta,\delta,\zeta)$-coordinate can be written as 
\begin{equation}\label{LA:ed}
    \eta(t)-\delta(t)=2\nu\int_t^\infty \eta(s)\,ds.
\end{equation}
The  convergence \eqref{convergence:p,b} implies that
\begin{align}
     \frac{\delta(t)}{\eta(t)}
&\to \left|\frac{\varsigma\rho-\alpha+i(\nu-\sigma)}{2M}-1\right|^2,\quad\text{as }t\to +\infty.\label{delta:eta}
\end{align}
We will prove in Appendix \ref{calculation} that 
\begin{equation}\label{EqAppendice}
    \left|\frac{\varsigma\rho-\alpha+i(\nu-\sigma)}{2M}-1\right|^2=\frac{\sigma-\nu}{\sigma+\nu}.
\end{equation}
Combining the Lyapunov functional \eqref{LA:ed} and the convergence \eqref{delta:eta}, the decay rate of $\eta(t)$ is given by
\begin{align}
   \frac{\eta(t)}{\int_t^\infty \eta(s)\,ds} & \to{\sigma+\nu},\quad\text{as }t\to +\infty,\\
\log\big({\int_t^\infty \eta(s)\,ds}\big) & \to -(\sigma+\nu)t(1+o(1)),\quad\text{as }t\to +\infty,\notag 
\end{align}
and 
\begin{equation}
\label{estimate:eta}    
 \eta(t)\le C_\varepsilon  e^{-(\sigma+\nu -\varepsilon)t},\quad t\ge0,\text{ for any }\varepsilon>0.
\end{equation}
Notice that the estimate \eqref{estimate:eta} holds also for $\delta(t)$ and $|\zeta(t)|$.

We write \begin{equation*}
    X=\begin{pmatrix}
    \eta(t)\\
    \delta(t)\\
    \zeta_R(t)=\text{Re}\zeta(t)\\
    \zeta_I(t)=\text{Im}\zeta(t)
    \end{pmatrix}\in\R^4,
\end{equation*}
then the ODE system of $(\eta,\delta,\zeta)$ can be written as
\begin{equation}\label{equation:X1}
    X'+AX=Q(X),
\end{equation}
where 
\begin{equation*}
    A=\begin{pmatrix}
    2\nu &  0  &  0&  -2\\
    0       &  0  &   0&  -2\\
    0       &  0  &   \nu&  2M+\alpha\\
    -M^2    & -M^2&-(2M+\alpha)&  \nu
    \end{pmatrix},
\end{equation*}
and 
\begin{equation*}
    Q(X)=
    \begin{pmatrix}
  0 \\
    0      \\
    (\eta+(3-\beta)\delta)\zeta_I    \\
     -(\eta+(3-\beta)\delta)\zeta_R-2M\delta^2-4M\eta\delta+\delta^3+3\eta\delta^2
    \end{pmatrix}.
\end{equation*}
The matrix $A$ has the eigenvalues
\begin{equation*}
    \nu\pm\sigma,\quad \nu\pm i\rho.
\end{equation*}
Now we write the ODE \eqref{equation:X1} as 
\begin{equation*}
    \frac{d}{dt}(e^{tA}X(t))=e^{tA}Q(X(t)),
\end{equation*}
where the solution $X(t)$ is given by the Duhamel formula
\begin{equation}\label{Duhamel1}
    X(t)=e^{-tA}X_\infty-\int_t^\infty e^{(s-t)A}Q(X(s))\,ds.
\end{equation}
We use estimate \eqref{estimate:eta}. Since $\sigma+\nu$ is the largest eigenvalue of $A$, and $Q$ is a quadratic-cubic form of $X$, one has
\begin{equation*}
\begin{aligned}
|X(t)|&\lesssim_\varepsilon e^{-(\sigma+\nu-\varepsilon)t},\\ 
    |e^{tA}(Q(X(t)))|&\lesssim_\varepsilon e^{-(\nu+\sigma-2\varepsilon)t},\\
    |e^{(s-t)A}(Q(X(s)))|&\lesssim_\varepsilon e^{(\sigma+\nu)(s-t)-2(\sigma+\nu-\varepsilon)s},\quad s,\,t\ge 0 \end{aligned}
\end{equation*}
and
\begin{equation*}
 \int_t^\infty |e^{(s-t)A}(Q(X(s)))|\,ds\lesssim_\varepsilon e^{-2(\sigma+\nu-\varepsilon)t},\quad t\ge 0.    
\end{equation*}
  
We substitute the above estimates to \eqref{Duhamel1} to obtain
\begin{equation}
    e^{-tA}X_\infty=O(e^{-(\sigma+\nu)t}).
\end{equation}

This shows that $X_\infty$ is an eigenvector of $A$ for the eigenvalue $\sigma+\nu$. Consequently, there exists a constant $\eta_\infty \in\R$, such that
$$
    X_\infty=\eta_\infty\left(\begin{array}{c}
      1\\
      \displaystyle\frac{\sigma-\nu}{\sigma+\nu}\\
     \displaystyle (2M+\alpha)\frac{\nu-\sigma}{2\sigma}\\
      \displaystyle\frac{\nu-\sigma}{2}
    \end{array}\right).
$$
We substitute $X_\infty$ to \eqref{Duhamel1} to conclude that there exists $\eta_\infty>0$ (since $\eta_\infty(t)>0$) such that 
\begin{equation}\label{as}
\begin{aligned}
    \eta(t)&=\eta_\infty e^{-(\sigma+\nu)t}(1+O(e^{-(\sigma+\nu)t})),\\
    \delta(t)&=\frac{\sigma-\nu}{\sigma+\nu}\eta_\infty e^{-(\sigma+\nu)t}(1+O(e^{-(\sigma+\nu)t})),\\
    \zeta(t)&=\big(\frac{\varsigma\rho-\alpha+i(\nu-\sigma)}{2}-M\big)\eta_\infty e^{-(\sigma+\nu)t}(1+O(e^{-(\sigma+\nu)t})),
\end{aligned}    
\end{equation}
where the last equality holds since, using \eqref{rhosigma}
$$
(2M+\alpha)\frac{(\nu-\sigma)}{2\sigma}+i\frac{\nu-\sigma}{2}
=\frac{\varsigma\rho-\alpha+i(\nu-\sigma)}{2}-M.$$

Conversely, we claim that 
for every $\eta_\infty>0$, there exists a unique triple $(\eta,\delta,\zeta)$ such that 
\begin{equation}
\left\{
\begin{aligned}
\eta'+2\nu \eta&=2\text{Im} \zeta,\\
\delta'&=2\text{Im} \zeta,\\
 \zeta'+(\nu-i(2M+\alpha))\zeta&=-i((3-\beta)\delta+\eta)\zeta-2i\eta\delta(M-\delta)\\
 &\quad+i(M-\delta)^2(\delta+\eta)
\end{aligned}
\right.
\end{equation}
satisfying the asymptotic behavior \eqref{as}. Indeed, by a fixed point argument one can solve \eqref{Duhamel1} on $[T,\infty)$, where $T$ is large enough such that $|X_\infty|e^{-(\nu+\sigma)T}\lesssim 1$, with the norm 
\begin{equation*}
    \|X\|_T:=\sup_{t\ge T}e^{(\nu+\sigma)t}|X(t)|.
\end{equation*}
 Then the extension to the whole real line is ensured by, say, the identities
$$|\zeta |^2=(M-\delta )^2\eta \delta \ ,\ \delta (t)+2\nu \int_t^\infty \eta (s)\, ds=\eta (t)$$
which, combined with the first equation,  lead to
$$ | \dot \eta |=O(\eta )\ .$$

Furthermore,  following the same argument as for the damped Szeg\H{o} equation in \cite{GG20}, the lower (upper) bounds of initial value $\eta(0)$ ensure the lower (upper) bounds for $\eta_\infty$. Namely, for every $C>0$, there exists $C'>0$ such that
\begin{itemize}
    \item if $\eta(0)\ge C^{-1}$, then $\eta_\infty\ge(C')^{-1}$,
    \item if $\eta(0)\le C$, then $\eta_\infty\le C'$.
\end{itemize}

\vspace{5mm}
{\bf Step 3: Asymptotic behavior for $b,c,p$ }

We first show that there exists 
\begin{equation*}
    (\eta_\infty,\theta,\varphi)\in(0,\infty)\times\T\times\T
\end{equation*}
such that, as $t\to+\infty$
\begin{equation}\label{DAS:scattering}
\begin{aligned}
    b(t)&\sim \sqrt{\eta_\infty}e^{-\frac{\nu+\sigma}{2}t-it(2M+\alpha)(\frac{\nu+\sigma}{2\sigma})+i\varphi},\\
    c(t)&\sim \sqrt{M} e^{-itM(1-\beta)+i\theta},\\
    p(t) & \sim \sqrt{\frac{\eta_\infty}{M}}\left(\frac{\varsigma\rho-\alpha+i(\nu-\sigma)}{2M}-1\right) e^{-\frac{\nu+\sigma}{2}t+it\frac{(\alpha+2M\beta)\sigma+(2M+\alpha)\nu}{2\sigma}+i(\theta-\varphi)}.
\end{aligned} 
\end{equation}
Notice that,  the convergence of $c(t)$ was shown in \eqref{convergence:c}. We combine the equations for $b$ and $\eta$, $p$ and $\delta$ to derive
\begin{align*}
    i\frac{d}{dt}\big(\frac{b}{\sqrt{\eta}}\big)&=\big(\eta-2\delta+2M+\alpha+\frac{\text{Re} \zeta }{\eta}\big)\frac{b}{\sqrt{\eta}}\\
    &=\big((2M+\alpha)\frac{\nu+\sigma}{2\sigma}+O(e^{-(\nu+\sigma)t})\big)\frac{b}{\sqrt{\eta}}\\
     i\frac{d}{dt}\big(\frac{\sqrt{M}p}{\sqrt{\delta}}\big)&=\big((1-\beta)(M-\delta)+\frac{\text{Re} \zeta }{\delta }\big)\frac{\sqrt{M}p}{\sqrt{\delta}}\\
     &=\big(-\frac{(\alpha+2M\beta)\sigma+(2M+\alpha)\nu}{2\sigma}+O(e^{-(\nu+\sigma)t})\big)\frac{\sqrt{M}p}{\sqrt{\delta}}.
\end{align*}
Then there exist $\varphi, \psi$ such that 
\begin{align*}
    b(t) & \sim \sqrt{\eta_\infty}e^{-\frac{\nu+\sigma}{2}t-it(2M+\alpha)\frac{\nu+\sigma}{2\sigma}+i\varphi},\\
    p(t) & \sim \sqrt{\frac{\eta_\infty}{M}}\Big(\frac{\sigma-\nu}{\sigma+\nu}\Big)^{\frac12} e^{-\frac{\nu+\sigma}{2}t+it\frac{(\alpha+2M\beta)\sigma+(2M+\alpha)\nu}{2\sigma}+i\psi}.
\end{align*}
On the other side, we recall the convergence \eqref{convergence:p,b}
\begin{equation*}
 e^{-itM(1-\beta)} \sqrt{M} \frac{\overline{p(t)}}{b(t)}
\to \left(\frac{\varsigma\rho-\alpha+i(\nu-\sigma)}{2M}-1\right)e^{-i\theta}
\end{equation*}
and the equality \eqref{EqAppendice} 
\begin{equation*}
  \big(\frac{\sigma-\nu}{\sigma+\nu}\big)^{\frac12}=\Big|\frac{\varsigma\rho-\alpha+i(\nu-\sigma)}{2M}-1\Big|.  
\end{equation*}
This implies that  
\begin{equation*}
     p(t)  \sim \sqrt{\frac{\eta_\infty}{M}}\left(\frac{\varsigma\rho-\alpha+i(\nu-\sigma)}{2M}-1\right) e^{-\frac{\nu+\sigma}{2}t+it\frac{(\alpha+2M\beta)\sigma+(2M+\alpha)\nu}{2\sigma}+i(\theta-\varphi)}.
\end{equation*}

Now we show that the asymptotic behavior \eqref{DAS:scattering} holds conversely. Namely, for a fixed $(\eta_\infty,\theta,\varphi)\in(0,\infty)\times\T\times\T$, there exists a unique trajectory  \begin{equation*}
u(t,x)=b(t)+\frac{c(t)e^{ix}}{1-p(t) e^{ix}}
\end{equation*}
satisfying the asymptotic behavior \eqref{DAS:scattering}.
From step 2, for fixed $(\eta_\infty,\theta,\varphi)\in (0,\infty)\times\T\times\T$ there exists a unique triplet $(\eta,\delta,\zeta)$ satisfying \eqref{ODE:bdz1} such that \begin{align*}
    \eta(t)&=\eta_\infty e^{-(\sigma+\nu)t}(1+O(e^{-(\sigma+\nu)t})),\\
    \delta(t)&=\frac{\sigma-\nu}{\sigma+\nu}\eta_\infty e^{-(\sigma+\nu)t}(1+O(e^{-(\sigma+\nu)t})),\\
    \zeta(t)&=\left(\frac{\varsigma\rho-\alpha+i(\nu-\sigma)}{2}-M\right)\eta_\infty e^{-(\sigma+\nu)t}(1+O(e^{-(\sigma+\nu)t})).
\end{align*}

Due to the structure of $\eta,\delta,\zeta$,  there exists a fixed large enough $T>0$ such that  $M>\delta(T)>0$, $\zeta(T)\neq 0$ and $\eta(T)>0$. Then there exists $(b_1,c_1,p_1)$ solving the ODE system \eqref{ODE:bcp} such that 
\begin{equation*}
    b_1(T)=\sqrt{\eta(T)},\quad \sqrt{M}p_1(T)=\sqrt{\delta(T)},\quad Mc_1(T)=\frac{\zeta(T)}{\overline{b_1(T)} \overline{p_1(T)}}.
\end{equation*}
Furthermore, due to the uniqueness of the Cauchy problem of the ODE system for $(\eta,\delta,\zeta)$, the above equations hold for all $t\in\R$.
On the other hand, $(b_1,c_1,p_1)$ satisfies the asymptotic behavior \eqref{DAS:scattering} with a pair  $(\theta_1,\varphi_1)\in\T\times\T$, i.e. 
\begin{align*}
     b_1(t) & \sim \sqrt{\eta_\infty}e^{-\frac{\nu+\sigma}{2}t-it(2M+\alpha)\frac{\nu+\sigma}{2\sigma}+i\varphi_1},\\
    c_1(t)&\sim \sqrt{M} e^{-itM(1-\beta)+i\theta_1},\\
    p_1(t) & \sim \sqrt{\frac{\eta_\infty}{M}}\left(\frac{\varsigma\rho-\alpha+i(\nu-\sigma)}{2M}-1\right) e^{-\frac{\nu+\sigma}{2}t+it\frac{(\alpha+2M\beta)\sigma+(2M+\alpha)\nu}{2\sigma}+i(\theta_1-\varphi_1)}.
\end{align*}
Then the triplet $(b,c,p)$ with
\begin{equation*}
    b(t)=e^{i(\varphi-\varphi_1)}b_1(t),\quad c(t)=e^{i(\theta-\theta_1)}c_1(t),\quad p(t)= e^{i(\theta-\varphi-\theta_1+\varphi_1)}p_1(t)
\end{equation*}
satisfies the ODE system \eqref{ODE:bcp} with the desired asymptotic properties. 

At the last step,  we show the uniqueness of the solution $(b,c,p)$.
We assume that $(\tilde b,\tilde c,\tilde p)$
is another solution of the ODE system \eqref{ODE:bcp} with the same asymptotic properties \eqref{DAS:scattering}. Then by the  uniqueness of $(\eta,\delta,\zeta)$, for any $t\in\R$
\begin{equation*}
 |\tilde b(t)|^2=\eta(t),\quad M|\tilde p(t)|^2=\delta(t),\quad M \tilde c(t)\overline{\tilde b(t) } \overline{ \tilde p(t) }=\zeta(t)
\end{equation*}
 and hence, $b-\tilde b$ and $p-\tilde p$ satisfy
\begin{align*}
    i\frac{d}{dt}\big(\frac{b-\tilde b}{\sqrt{\eta}}\big)&=\big((2M+\alpha)\frac{\nu+\sigma}{2\sigma}+O(e^{-(\nu+\sigma)t})\big)\frac{b -\tilde b}{\sqrt{\eta}},\\
     i\frac{d}{dt}\big(\frac{\sqrt{M}(p-\tilde p)}{\sqrt{\delta}}\big)&=\big(-\frac{(\alpha+2M\beta)\sigma+(2M+\alpha)\nu}{2\sigma}+O(e^{-(\nu+\sigma)t})\big)\frac{\sqrt{M}(p-\tilde p)}{\sqrt{\delta}}.
\end{align*}
The first ODE implies 
$$\Big|\frac{b-\tilde b}{\sqrt{\eta}}(t)\Big|\le \int_t^\infty O(e^{-(\nu+\sigma)s}) \Big|\frac{b-\tilde b}{\sqrt{\eta}}(s)\Big|\,ds,$$
Combining this estimate with the boundedness of $|\frac{b-\tilde b}{\sqrt{\eta}}|$, one can show that $b(t)=\tilde b(t)$ for any $t\in\R$ by an iterative argument. By a similar argument, we have $p(t)=\tilde p(t)$ for all $t\in\R$. Since $c(t)\overline{ b(t) } \overline{  p(t) }=\tilde c(t)\overline{\tilde b(t) } \overline{ \tilde p(t) }=\zeta(t)$, we conclude that  $c(t)=\tilde c(t)$, for all $t\in\R$.

\vspace{5mm} 
\textbf{Step 4: Geometric structure of $\Sigma_{M,\nu,\alpha,\beta}\subset\mathcal{E}_M$.}
We define the map
\begin{equation*}
   J:(0,\infty)\times \T\times \T\to \mathcal{E}_M,\quad (\eta_\infty,\theta,\varphi)\mapsto u(0),
\end{equation*}
where $u$ is the unique solution of \eqref{DABS} corresponding to the asymptotic behavior \eqref{DAS:scattering}. Step 3 ensures that the range of $J$ is $\Sigma_{M,\nu,\alpha,\beta}$ and its injectivity is given by the conserved momentum. In order to prove that $\Sigma_{M,\nu,\alpha,\beta}$ is a three-dimensional submanifold of $\mathcal E_M$, one only need to show that $J$ is a proper immersion. 

{\it Smoothness.}
The smoothness of  $J$ with respect to $\eta_\infty$ is a consequence of the fixed point argument in Step 3.
   The dependence with respect to $(\theta,\varphi )$ is much more elementary, since it reflects the gauge and translation invariances, hence it is smooth as well. 

{\it Immersion.}
To prove the immersion property, we just have to check that, for every $(\eta_\infty ,\theta ,\varphi )$, the  three vectors $$\partial_{\eta_\infty}J(\eta_\infty ,\theta ,\varphi ), \partial_\theta J(\eta_\infty ,\theta ,\varphi ), \partial_\varphi J(\eta_\infty ,\theta ,\varphi )$$
are independent. We claim that the subspace spanned by these three vectors is also spanned by $\partial_tu(0), -i\partial_xu(0), iu(0)$. Indeed, in view of Step 3 and of the invariances of equation \eqref{DABS}, one easily checks the following identity,
\begin{equation*}
\begin{aligned}
&e^{i\varphi}\lS_{\alpha,\beta,\nu}(t+T)\left [ J(\eta_\infty, \theta_0, \varphi_0)\right ](x+\theta -\varphi)\\
=&\lS_{\alpha,\beta,\nu}(t)\left[J(\bar{\eta}_\infty,\bar{\theta},\bar{\varphi})\right](x),
\end{aligned}
\end{equation*}
where
\begin{equation*}
\begin{aligned}
&\bar{\eta}_\infty=\eta_\infty{ e}^{-(\sigma+\nu)T},\\
&\bar{\theta}=\theta_0+\theta -M(1-\beta)T,\\
&\bar{\varphi}=\varphi_0+\varphi -(M+\frac{\alpha}{2}) (1+\frac{\nu}{\sigma})T.
\end{aligned}
\end{equation*}
If these three vectors were dependent, this would mean that $u$ is a traveling wave of equation \eqref{DABS}. This would impose that $(u\vert \mathds{1})\equiv 0$, hence $u\in \mathcal C_M$, which is impossible since $\Sigma_{M,\nu,\alpha,\beta}$ is disjoint from $ \mathcal C_M$. 

{\it Closeness.} 
Since $\mathcal C_M$ is compact, it is enough to prove that the closure of
$\Sigma_{M,\nu,\alpha,\beta}$ is contained into  $\Sigma_{M,\nu,\alpha,\beta}\cup \mathcal C_M$. 
Let $(u_n)$ be a sequence of points of $\Sigma_{M,\nu,\alpha,\beta}$ which tends to $u\in \mathcal W$.
Set $(\eta_n,\theta_n,\varphi_n):=J^{-1}(u_n)$. Since $J$ is a homeomorphism onto its range $\Sigma_{M,\nu,\alpha,\beta}$, the only cases to be studied are $\eta_n\to 0$ and $\eta_n\to +\infty $. We use the last part of Step 2 that the lower (upper) bounds of initial value $\eta(0)$ ensure the lower (upper) bounds for $\eta_\infty$. 
As a consequence, in the case $\eta_n\to0$, we obtain that $(u\vert \mathds{1})=0$, and more generally that $(\lS_{\nu,\alpha,\beta} (t)(u)\vert \mathds{1})=0$, so that $u\in \mathcal C_M$. In the second case, we infer $|(u_n\vert \mathds{1})|\to \infty $, which 
contradicts the fact that $u_n$ is convergent.

The proof of Theorem \ref{twocases} is  complete.


\section{Appendix: A calculation}\label{calculation}
In the following, we give the details of the calculations on 
\begin{equation*}
    Z:=\left|\frac{\varsigma\rho-\alpha+i(\nu-\sigma)}{2M}-1\right|^2=\frac{\sigma-\nu}{\sigma+\nu},
\end{equation*}
 where $\varsigma, \sigma ,\rho$ are given by \eqref{sign} and \eqref{rhosigma}.
 We recall the following equalities.
\begin{equation*}\sigma^2-\rho^2=\nu^2-\alpha^2-4\alpha M\text{ and } \varsigma\sigma\rho=\nu(\alpha+2M).
\end{equation*}
 
Remark first that the case $\sigma=\nu$ is excluded. Indeed, if $\sigma=\nu$ then $\rho^2=\alpha^2+4M\alpha$ which is not compatible with the second condition.
We calculate
\begin{align*}
    Z&=\Big|\frac{\varsigma\rho-\alpha+i(\nu-\sigma)}{2M}-1\Big|^2\\
    &=1+\frac{1}{4M^2} \Big((\varsigma\rho-\alpha)^2+(\nu-\sigma)^2-4M(\varsigma\rho-\alpha)\Big)\\
      &=1+\frac{1}{4M^2} \Big(\rho^2+\alpha^2+\nu^2+\sigma^2-2\nu\sigma +4M\alpha-2\varsigma \rho(\alpha+2M)\Big)\\ 
       &=1+\frac{1}{2M^2} 
         (\rho^2+\nu^2)(1-\frac{\sigma}{\nu}).
   \end{align*}
  Then as above 
\begin{align*}
    (1-Z)(\nu+\sigma)
    &=\frac{\rho^2+\nu^2}{2M^2}\frac{\sigma-\nu}{\nu}(\sigma+\nu)\\
    &=\frac{1}{2\nu M^2}(\sigma^2\rho^2+\sigma^2\nu^2-\nu^2\rho^2-\nu^4)\\
     &=\frac{\nu}{2 M^2}((\alpha+2M)^2+\sigma^2-\rho^2-\nu^2)\\
     &=\frac{\nu}{2 M^2}((\alpha+2M)^2-\alpha^2-4M\alpha)\\
     &=2\nu.
\end{align*}
According to the above calculation, one has 
\begin{equation*}
    \frac{2\nu}{1-Z}=\nu+\sigma\quad\text{and} \quad Z=\frac{\sigma-\nu}{\sigma+\nu}.
\end{equation*}

\section*{Acknowledgements}
Z.\ H. thanks for the financial support from Karlsruhe House of Young Scientists (KHYS) through the Research Travel Grant for a stay at the Université Paris-Saclay, which makes the collaboration possible.

\printbibliography
\end{document}